\documentclass[final]{article}

     \PassOptionsToPackage{sort&compress,numbers}{natbib}

\usepackage{neurips_2023}
\RequirePackage{etex}




\usepackage[utf8]{inputenc} 
\usepackage[T1]{fontenc}    
\usepackage{hyperref}       
\usepackage{url}            
\usepackage{booktabs}       
\usepackage{amsfonts}       
\usepackage{nicefrac}       
\usepackage{microtype}      
\usepackage{xcolor}         
\usepackage{amsmath,amssymb,mathtools,amsthm}
\usepackage{cleveref}
\usepackage{xspace}
\usepackage{subcaption}
\usepackage{autonum}

\crefname{equation}{}{}
\Crefname{equation}{Equation}{Equations}

\newcommand{\set}[1]{\mathsf{#1}}
\newcommand{\Ltwo}{\set{L}^2}

\renewcommand{\d}{\mathrm{d}}
\newcommand{\real}{\mathbb{R}}
\newcommand{\RKHS}{\mathcal{H}}
\newcommand{\order}{\mathcal{O}}
\newcommand{\norm}[1]{\left\|#1\right\|}
\newcommand{\mat}[1]{\boldsymbol{#1}}
\renewcommand{\vec}[1]{\boldsymbol{#1}}
\DeclareMathOperator{\prob}{\mathbb{P}}
\DeclareMathOperator{\expect}{\mathbb{E}}
\newcommand{\RPCholesky}{\textsc{RPCholesky}\xspace}
\DeclareMathOperator{\Tr}{Tr}
\DeclareMathOperator{\Unif}{\textsc{Unif}}

\DeclareMathOperator{\mean}{mean}

\usepackage{algorithm, algpseudocode, algorithmicx}

\newcommand{\Err}{\mathrm{Err}}

\newtheorem{theorem}{Theorem}
\newtheorem{proposition}[theorem]{Proposition}
\theoremstyle{definition}
\newtheorem{definition}[theorem]{Definition}

\newif\ifneurips
\neuripsfalse

\newcommand{\boxedtext}[1]{\begin{center} \vspace{0.5pc}
		\fbox{ \begin{minipage}{0.9\textwidth}
				\begin{center} #1 \end{center}
			\end{minipage}
		} \vspace{0.5pc}
	\end{center}}

\includeonly{supplement.tex}

\title{Kernel Quadrature with Randomly Pivoted Cholesky}

%

\author{%
  Ethan N.~Epperly and Elvira Moreno\thanks{ENE acknowledges support by NSF FRG 1952777, Carver Mead New Horizons Fund, and the U.S. Department of Energy, Office of Science, Office of Advanced Scientific Computing Research, Department of Energy Computational Science Graduate Fellowship under Award Number DE-SC0021110. EM was supported in part by Air Force Office of Scientific Research grant FA9550-22-1-0225.} \\
  Department of Computing and Mathematical Sciences\\
  California Institute of Technology\\
  Pasadena, CA 91125 \\
  \texttt{\{eepperly,emoreno2\}@caltech.edu} \\
}

\begin{document}

\maketitle

\begin{abstract}
    This paper presents new quadrature rules for functions in a reproducing kernel Hilbert space using nodes drawn by a sampling algorithm known as randomly pivoted Cholesky.
    The resulting computational procedure compares favorably to previous kernel quadrature methods, which either achieve low accuracy or require solving a computationally challenging sampling problem. Theoretical and numerical results show that randomly pivoted Cholesky is fast and achieves comparable quadrature error rates to more computationally expensive quadrature schemes based on continuous volume sampling, thinning, and recombination. 
    Randomly pivoted Cholesky is easily adapted to complicated geometries with arbitrary kernels, unlocking new potential for kernel quadrature.
\end{abstract}

\section{Introduction}

Quadrature is one of the fundamental problems in computational mathematics, with applications in Bayesian statistics \cite{Bayesian_choice}, probabilistic ODE solvers \cite{prob_ode}, reinforcement learning \cite{rob_control}, and model-based machine learning \cite{MBML}.
The task is to approximate an integral of a function $f$ by the weighted sum of $f$'s values at judiciously chosen quadrature points $s_1,\ldots,s_n$:
\begin{equation} \label{eq:quadrature}
    \int_{\set{X}} f(x) g(x) \, \d\mu(x) \approx \sum_{i=1}^n w_i f(s_i).
\end{equation}
Here, and throughout, $\set{X}$ denotes a topological space equipped with a Borel measure $\mu$, and $g \in \Ltwo(\mu)$ denotes a square-integrable function.
The goal of kernel quadrature is to select quadrature weights $w_1,\ldots,w_n \in \real$ and nodes $s_1,\ldots,s_n\in \set{X}$ which minimize the error in the approximation \cref{eq:quadrature} for all $f$ drawn from a reproducing kernel Hilbert space (RKHS) $\RKHS$ of candidate functions.

The ideal kernel quadrature scheme would satisfy three properties:
\begin{enumerate}
    \item \emph{Spectral accuracy.} The error of the approximation \cref{eq:quadrature} decreases at a rate governed by the eigenvalues of the reproducing kernel $k$ of $\RKHS$, with rapidly decaying eigenvalues guaranteeing rapidly decaying quadrature error.
    \item \emph{Efficiency.} The nodes $s_1,\ldots,s_n$ and weights $w_1,\ldots,w_n$ can be computed by an algorithm which is efficient in both theory and practice.
\end{enumerate}

The first two goals may be more easily achieved if one has access to a Mercer decomposition
\begin{equation} \label{eq:mercer}
    k(x,y) = \sum_{i=1}^\infty \lambda_i e_i(x) e_i(y),     
\end{equation}
where $e_1,e_2,\ldots$ form an orthonormal basis of $\Ltwo(\mu)$ and the eigenvalues $\lambda_1 \ge \lambda_2 \ge \cdots $ are decreasing.
Fortunately, the Mercer decomposition is known analytically for many RKHS's $\RKHS$ on simple sets $\set{X}$ such as boxes $[0,1]^d$.
However, such a decomposition is hard to compute for general kernels $k$ and domains $\set{X}$, leading to the third of our desiderata:
\begin{enumerate}
    \setcounter{enumi}{2}
    \item \emph{Mercer-free.} The quadrature scheme can be efficiently implemented without access to an explicit Mercer decomposition \cref{eq:mercer}.
\end{enumerate}

Despite significant progress in the probabilistic and kernel quadrature literature (see, e.g., \cite{BBC20, BBC19a, Bel21, FD12, Bardenet2016MonteCW,bach_lever,bach_herd,DM21,DM22,GBV19,TsujiTanakaPokutta2022,PWKQ,hayakawa2023samplingbased}), the search for a kernel quadrature scheme meeting all three criteria remains ongoing.

\paragraph{Contributions.}
We present a new kernel quadrature method based on the \emph{randomly pivoted Cholesky} (\RPCholesky) sampling algorithm that achieves all three of the goals.
Our main contributions are
\begin{enumerate}
    \item Generalizing \RPCholesky sampling (introduced in \cite{CETW23} for finite kernel matrices) to the continuum setting and demonstrating its effectiveness for kernel quadrature.
    \item Establishing theoretical results (see \cref{thm:rpc}) which show that \RPCholesky kernel quadrature achieves near-optimal quadrature error rates.
    \item Developing efficient rejection sampling implementations (see \cref{alg:rpc_rejection,alg:rpc_rejection+optimization}) of \RPCholesky in the continuous setting, allowing kernel quadrature to be applied to general spaces $\set{X}$, measures $\mu$, and kernels $k$ with ease.
\end{enumerate}

The remainder of this introduction will sketch our proposal for \RPCholesky kernel quadrature.
A comparison with existing kernel quadrature approaches appears in \cref{sec:related}.

\paragraph{Randomly pivoted Cholesky.} Let $\RKHS$ be an RKHS with a kernel $k$ that is integrable on the diagonal
\begin{equation} \label{eq:integrable}
    \int_{\set{X}} k(x,x) \, \d\mu(x) < +\infty.
\end{equation}
\RPCholesky uses the kernel diagonal $k(x,x)$ as a \emph{sampling distribution} to pick quadrature nodes.
The first node $s_1$ is chosen to be a sample from the distribution $k(x,x) \, \d\mu(x)$, properly normalized:
\begin{equation} \label{eq:rpc_sample}
    s_1 \sim \frac{k(x,x) \, \d\mu(x)}{\int_{\set{X}} k(x,x) \, \d\mu(x)}.
\end{equation}
Having selected $s_1$, we remove its influence on the kernel, updating the entire kernel function:
\begin{equation} \label{eq:rpc_update}
    k'(x,y) = k(x,y) - \frac{k(x,s_1)k(s_1,y)}{k(s_1,s_1)}.
\end{equation}
Linear algebraically, the update \cref{eq:rpc_update} can be interpreted as Gaussian elimination, eliminating ``row'' and ``column'' $s_1$ from the ``infinite matrix'' $k$.
Probabilistically, if we interpret $k$ as the covariance function of a Gaussian process, the update \cref{eq:rpc_update} represents conditioning on the value of the process at $s_1$.
We use the updated kernel $k'$ to select the next quadrature node:
\begin{equation}
    s_2 \sim \frac{k'(x,x) \, \d\mu(x)}{\int_{\set{X}} k'(x,x) \, \d\mu(x)},
\end{equation}
whose influence is then subtracted off $k'$ as in \cref{eq:rpc_update}.
\RPCholesky continues along these lines until $n$ nodes have been selected.
The resulting algorithm is shown in \cref{alg:rpc_unoptimized}.
Having chosen the nodes $s_1,\ldots,s_n$, our choice of weights $w_1,\ldots,w_n$ is standard and is discussed in \cref{sec:kq}.

\begin{algorithm}[ht]
	\caption{\RPCholesky: unoptimized implementation \label{alg:rpc_unoptimized}}
	\begin{algorithmic}[1]
		\Require Kernel $k : \set{X} \times \set{X} \to \real$ and number of quadrature points $n$
		\Ensure Quadrature points $s_1,\ldots,s_n$
		\For{$i = 1,2,\ldots,n$}
            \State Sample $s_i$ from the probability measure $k(x,x) \, \d\mu(x) / \int_{\set{X}} k(x,x) \, \d\mu(x)$
            \State Update kernel $k \gets k - k(\cdot,s_i) k(s_i,s_i)^{-1} k(s_i,\cdot)$
        \EndFor
	\end{algorithmic}
\end{algorithm}

\RPCholesky sampling is more flexible than many kernel quadrature methods, easily adapting to general spaces $\set{X}$, measures $\mu$, and kernels $k$.
To demonstrate this flexibility, we apply \RPCholesky to the region $\set{X}$ in \cref{fig:conditional} equipped with the Mat\'ern $5/2$-kernel with bandwidth $2$ and the measure
\begin{equation}
    \d\mu(x,y) = (x^2+y^2)\, \d x \, \d y.
\end{equation}
A set of $n = 20$ quadrature nodes produced by \RPCholesky sampling (using \cref{alg:rpc_rejection}) is shown in \cref{fig:conditional}.
The cyan--pink shading shows the diagonal of the kernel after updating for the selected nodes.
We see that near the selected nodes, the updated kernel is very small, meaning that future steps of the algorithm will avoid choosing nodes in those regions.
Nodes far from any currently selected nodes have a much larger kernel value, making them more likely to be chosen in future \RPCholesky iterations.
\begin{figure}[ht]
    \centering
    
    \begin{subfigure}{0.485\textwidth}
        \centering 
        \includegraphics[height=2in]{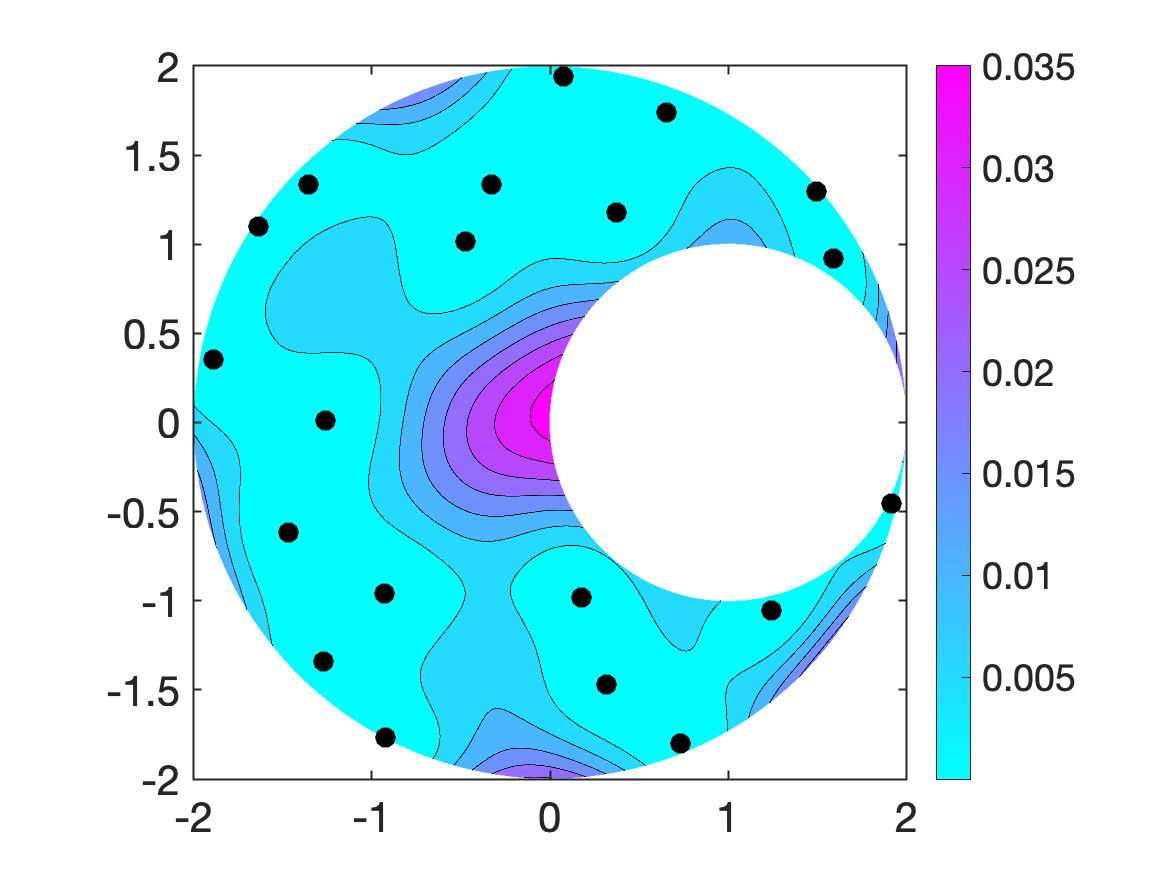}
        \caption{}\label{fig:conditional}
    \end{subfigure}
    \begin{subfigure}{0.485\textwidth}
        \centering \includegraphics[height=2in]{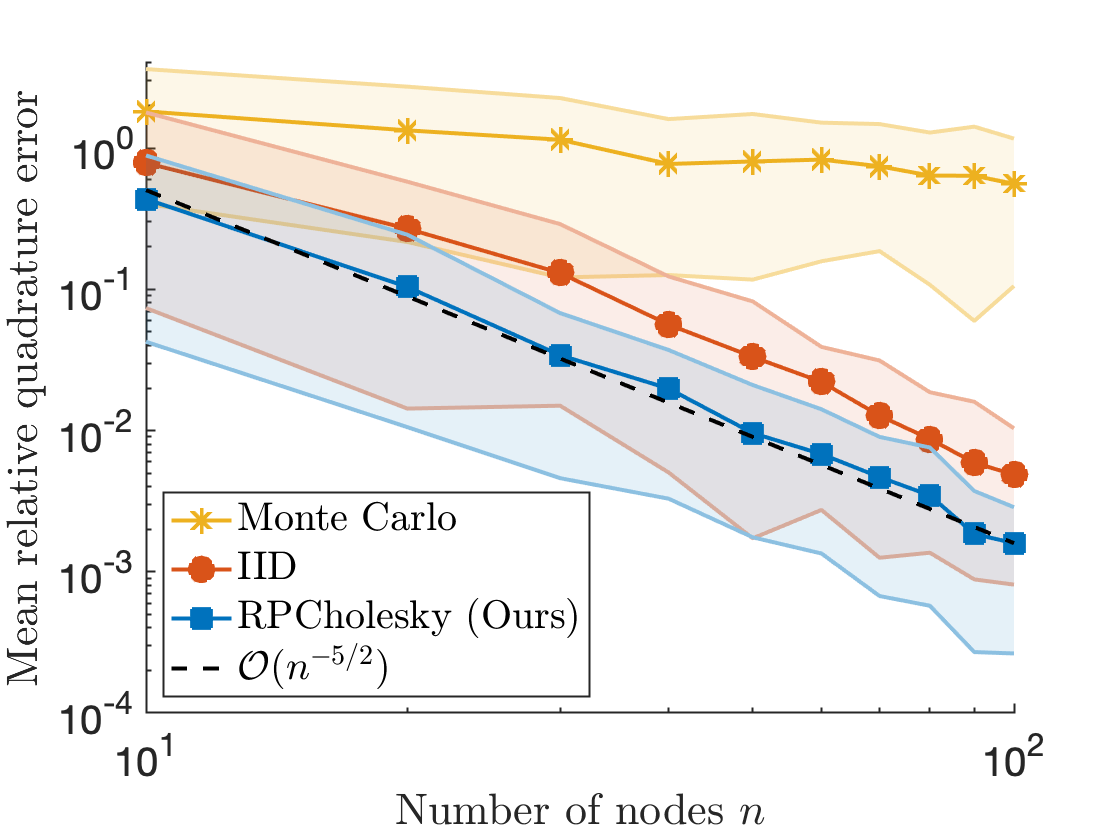}
        \caption{}\label{fig:error_moon}
    \end{subfigure}
    \caption{\textbf{\RPCholesky kernel quadrature on oddly shaped region.} \textit{Left:} Black dots show 20 points selected by \RPCholesky on a crescent-shaped region. 
    Shading shows kernel values $k((x,y),(x,y))$ after removing influence of selected points. \textit{Right:} Mean relative error for \RPCholesky kernel quadrature, iid kernel quadrature, and Monte Carlo quadrature for $f(x,y) = \sin(x)\exp(y)$ with 100 trials.
    Shaded regions show 10\%/90\% quantiles.}\label{fig:moon}
\end{figure}

The quadrature error for $f(x,y) = \sin(x)\exp(y)$, $g\equiv 1$, and different numbers $n$ of quadrature nodes for \RPCholesky kernel quadrature, kernel quadrature with nodes drawn iid from $\mu / \mu(\set{X})$, and Monte Carlo quadrature are shown in \cref{fig:error_moon}.
Other spectrally accurate kernel quadrature methods would be difficult to implement in this setting because they would require an explicit Mercer decomposition (projection DPPs and leverage scores) or an expensive sampling procedure (continuous volume sampling).
A comparison of \RPCholesky with more kernel quadrature methods on a benchmark problem is provided in \cref{fig:benchmark}.

\section{Related work} \label{sec:related}

Here, we overview past work on kernel quadrature and discuss the history of \RPCholesky sampling.

\subsection{Kernel quadrature} \label{sec:related_kq}

The goal of kernel quadrature is to provide a systematic means for designing quadrature rules on RKHS's. Relevant points of comparison are Monte Carlo \cite{MCbound} and quasi-Monte Carlo \cite{dick_pillichshammer_2010} methods, which have $\order(1/\sqrt{n})$ and $\order(\operatorname{polylog} (n)/n)$ convergence rates, respectively.

The literature on probabilistic and kernel approaches to quadrature is vast, so any short summary is necessarily incomplete.
Here, we briefly summarize some of the most prominent kernel quadrature methods, highlighting limitations that we address with our \RPCholesky approach.
We refer the reader to \cite[Tab.~1]{PWKQ} for a helpful comparison of many of the below-discussed methods. 

\paragraph{Herding.} Kernel herding schemes \cite{CWS10,bach_herd,TsujiTanakaPokutta2022,FD12} iteratively select quadrature nodes using a greedy approach.
These methods face two limitations.
First, they require the solution of a (typically) nonconvex global optimization problem at every step, which may be computationally costly.
Second, these methods exhibit slow $\order(1/n)$ quadrature error rates \cite[Prop.~4]{CWS10} (or even slower \cite{Dun80}). 
Optimally weighted herding is known as sequential Bayesian quadrature \cite{FD12}.

\paragraph{Thinning.}
Thinning methods \cite{DM21,DM22,SDM22} try to select $n$ good quadrature nodes from a larger set of $N\gg n$ candidate nodes drawn from a simple distribution. 
While these methods have other desirable theoretical properties, they do not benefit from spectral accuracy. 

\paragraph{Leverage scores and projection DPPs.}
With optimal weights (\cref{sec:kq}), quadrature with nodes sampled using a projection determintal point process (DPP) \cite{BBC19a} (see also \cite{GBV19,Bardenet2016MonteCW, Bel21}) or iid from the (ridge) leverage score distribution \cite{bach_lever} achieve spectral accuracy.
However, known efficient sampling algorithms require access to the Mercer decomposition \cref{eq:mercer}, limiting the applicability of these schemes to simple spaces $\set{X}$, measures $\mu$, and kernels $k$, where the decomposition is known analytically.

\paragraph{Continuous volume sampling.}
Continuous volume sampling is the continuous analog of $n$-DPP sampling \cite{KT11}, providing quadrature nodes that achieve spectral accuracy \cite[Prop.~5]{BBC20}.
Unfortunately, continuous volume sampling is computationally challenging.
In the finite setting, the best-known algorithm \cite{CDV20a} for exact $n$-DPP sampling requires a costly $\order(|\set{X}| \cdot n^{6.5} + n^{9.5})$ operations.
Inexact samplers based on Markov chain Monte Carlo (MCMC) (e.g., \cite{AOR16,ALV22,RO19}) may be more competitive, but the best-known samplers in the continuous setting still require an expensive $\order(n^5\log n)$ MCMC steps \cite[Thms.~1.3--1.4]{RO19}.
\RPCholesky sampling achieves similar theoretical guarantees to continuous volume sampling (see \cref{thm:rpc}) and can be efficiently and exactly sampled (\cref{alg:rpc_rejection}).

\paragraph{Recombination and convex weights.}
The paper \cite{PWKQ} (see also \cite{hayakawa2023samplingbased}) proposes two ideas for kernel quadrature when $\mu$ is a probability measure and $g\equiv 1$.
First, they suggest using a recombination algorithm (e.g., \cite{LL12}) to subselect good quadratures from $N\gg n$ candidate nodes iid sampled from $\mu$.
All of the variants of their method either fail to achieve spectral accuracy or require an explicit Mercer decomposition \cite[Tab.~1]{PWKQ}.
Second, they propose choosing weights $(w_1,\ldots,w_n)$ that are convex combination coefficients.
This choice makes the quadrature scheme robust against misspecification of the RKHS $\RKHS \not\ni f$, among other benefits.
It may be worth investigating a combination of \RPCholesky quadrature nodes with convex weights in future work.

\subsection{Randomly pivoted Cholesky}

\RPCholesky was proposed, implemented, and analyzed in \cite{CETW23} for the task of approximating an $M\times M$ positive-semidefinite matrix $\mat{A}$.
The algorithm is algebraically equivalent to applying an earlier algorithm known as \emph{adaptive sampling} \cite{DRVW06,DV06} to $\mat{A}^{1/2}$.
Despite their similarities, \RPCholesky and adaptive sampling are different algorithms: To produce a rank-$n$ approximation to an $M\times M$ matrix, \RPCholesky requires $\order(n^2M)$ operations, while adaptive sampling requires a much larger $\order(nM^2)$ operations.
See \cite[\S4.1]{CETW23} for further discussion on \RPCholesky's history.
In this paper, we introduce a continuous extension of \RPCholesky and analyze its effectiveness for kernel quadrature.

\section{Theoretical results} \label{sec:main}

In this section, we prove our main theoretical result for \RPCholesky kernel quadrature. 
We first establish the mathematical setting (\cref{sec:setting}) and introduce kernel quadrature (\cref{sec:kq}).
Then, we present our main theorem (\cref{sec:error_bound}) and discuss its proof (\cref{sec:proof_idea}).

\subsection{Mathematical setting and notation} \label{sec:setting}

Let $\mu$ be a Borel measure supported on a topological space $\set{X}$ and let $\RKHS$ be a RKHS on $\set{X}$ with continuous kernel $k : \set{X} \times \set{X} \to \real$.
We assume that $x \mapsto k(x,x)$ is integrable \cref{eq:integrable} and that $\RKHS$ is dense in $\Ltwo(\mu)$.
These assumptions imply that $\RKHS$ is compactly embedded in $\Ltwo(\mu)$, the Mercer decomposition \cref{eq:mercer} converges pointwise, and the Mercer eigenfunctions form an orthonormal basis of $\Ltwo(\mu)$ and an orthogonal basis of $\RKHS$ \cite[Thm.~3.1]{SS12}.

Define the integral operator
\begin{equation} \label{eq:integral_operator}
    Tf(x) = \int_{\set{X}} k(x,y) f(y) \, \d\mu(y).
\end{equation}
Viewed as an operator $T : \Ltwo(\mu) \to \Ltwo(\mu)$, $T$ is self-adjoint, positive semidefinite, and trace-class.

One final piece of notation: For a function $\delta : \set{X} \to \real$ and an ordered (multi)set $\set{S} = \{s_1,\ldots,s_n\}$, we let $\delta(\set{S})$ be the column vector with $i$th entry $\delta(s_i)$.
Similarly, for a bivariate function $h : \set{X} \times \set{X}\to\real$, $h(\cdot,\set{S})$ (resp.\ $h(\set{S},\cdot)$) denotes the row (resp.\ column) vector-valued function with $i$th entry $h(\cdot,s_i)$ (resp.\ $h(s_i,\cdot)$), and $h(\set{S},\set{S})$ denotes the matrix with $ij$ entry $h(s_i,s_j)$.

\subsection{Kernel quadrature} \label{sec:kq}

Following earlier works (e.g., \cite{bach_lever,BBC19a}), let us describe the kernel quadrature problem more precisely.
Given a function $g \in \Ltwo(\mu)$, we seek quadrature weights $\vec{w} = (w_1,\ldots,w_n)\in \real^n$ and nodes $\set{S} = \{ s_1,\ldots,s_n \} \subseteq \set{X}$ that minimize the maximum quadrature error over all $f \in \RKHS$ with $\norm{f}_\RKHS \le 1$:
\begin{equation} \label{eq:error_formula_almost}
    \mathrm{Err}(\set{S},\vec{w};g) \coloneqq \max_{\norm{f}_{\RKHS} \le 1} \left|\int_{\set{X}} f(x) g(x) \, \d\mu(x) - \sum_{i=1}^n w_i f(s_i)\right|.
\end{equation}
A short derivation (\cite[Eq.~(7)]{bach_lever} or \cref{sec:derivation}) yields the simple formula
\begin{equation} \label{eq:error_formula_complete}
    \mathrm{Err}(\set{S},\vec{w};g) = \norm{Tg - \sum_{i=1}^n w_i k(\cdot,s_i)}_{\RKHS}.
\end{equation}
This equation reveals that the quadrature rule minimizing $\Err(\set{S},\vec{w};g)$ is the least-squares approximation of  $Tg \in \RKHS$ by a linear combination of kernel function evaluations $\sum_{i=1}^n w_i k(\cdot,s_i)$.

If we fix the quadrature nodes $\set{S} = \{s_1,\ldots,s_n\}$, the optimal weights $\vec{w}_{\star} = (w_{\star 1},\ldots,w_{\star n}) \in \real^n$ minimizing $\mathrm{Err}(\set{S},\vec{w};g)$ are the solution of the linear system of equations
\begin{equation} \label{eq:weights}
    k(\set{S},\set{S})\vec{w}_\star = Tg(\set{S}).
\end{equation}
We use \cref{eq:weights} to select the weights throughout this paper, and denote the error with optimal weights by
\begin{equation} \label{eq:error_def}
    \Err(\set{S};g) \coloneqq \Err(\set{S},\vec{w}_\star;g).
\end{equation}

\subsection{Error bounds for randomly pivoted Cholesky kernel quadrature} \label{sec:error_bound}

Our main result for \RPCholesky kernel quadrature is as follows:
\begin{theorem} \label{thm:rpc}
    Let $\set{S} = \{s_1,\ldots,s_n\}$ be generated by the \RPCholesky sampling algorithm.
    For any function $g \in \Ltwo(\mu)$, nonnegative integer $r \ge 0$, and real number $\delta > 0$, we have
    \begin{equation}
        \expect \Err^2(\set{S};g) \le \delta \cdot \norm{g}_{\Ltwo(\mu)}^2 \cdot \sum_{i=r+1}^\infty \lambda_i 
    \end{equation}
    provided
    \begin{equation} \label{eq:n_condition}
        n \ge r \log\left( \frac{2\lambda_1}{\delta \sum_{i=r+1}^\infty \lambda_i} \right) + \frac{2}{\delta}.
    \end{equation}
\end{theorem}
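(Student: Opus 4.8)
The plan is to split the argument into a deterministic reduction, rewriting the optimal‑weight quadrature error in terms of the \RPCholesky residual kernel, and a probabilistic estimate on the size of that residual, which is the continuum counterpart of the matrix analysis in \cite{CETW23}. For the reduction, note that the optimal weights \cref{eq:weights} make $\sum_i w_{\star i} k(\cdot,s_i)$ the $\RKHS$‑orthogonal projection $P_\set{S}$ of $Tg$ onto $\RKHS_\set{S} := \operatorname{span}\{k(\cdot,s_i) : i=1,\dots,n\}$, so $\Err^2(\set{S};g) = \norm{(I-P_\set{S})Tg}_\RKHS^2$. Writing $Tg = \int_{\set{X}} k(\cdot,y) g(y)\,\d\mu(y)$, moving $I-P_\set{S}$ inside the integral, and using the reproducing property together with the identity $\langle (I-P_\set{S})k(\cdot,x), (I-P_\set{S})k(\cdot,y)\rangle_\RKHS = k_\set{S}(x,y)$, this collapses to
\[
  \Err^2(\set{S};g) = \iint_{\set{X}^2} k_\set{S}(x,y)\, g(x) g(y)\,\d\mu(x)\,\d\mu(y) = \langle g, T_\set{S}\, g\rangle_{\Ltwo(\mu)},
\]
where $k_\set{S}(x,y) = k(x,y) - k(x,\set{S}) k(\set{S},\set{S})^{-1} k(\set{S},y)$ is exactly the kernel produced by $n$ \RPCholesky updates (the reproducing kernel of $\RKHS \ominus \RKHS_\set{S}$) and $T_\set{S}$ its positive‑semidefinite, trace‑class integral operator. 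Taking expectations and using $\langle g, Eg\rangle \le \norm{g}_{\Ltwo(\mu)}^2 \norm{E}_{\mathrm{op}}$ for $E := \expect T_\set{S} \succeq 0$, it suffices to prove $\norm{\expect T_\set{S}}_{\mathrm{op}} \le \delta \sum_{i=r+1}^\infty \lambda_i$ whenever $n$ satisfies \cref{eq:n_condition}.

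\textbf{The \RPCholesky operator recursion.} Let $T_i := T_{\set{S}_i}$ be the residual operator after $i$ steps ($T_0 = T$) and $E_i := \expect T_i$. By construction, the $(i{+}1)$st pivot has density $k_{\set{S}_i}(s,s)/\Tr T_i$ and its update subtracts the rank‑one operator with kernel $k_{\set{S}_i}(\cdot,s)k_{\set{S}_i}(s,\cdot)/k_{\set{S}_i}(s,s)$, whose $\mu$‑average equals $T_i^2/\Tr T_i$; hence $\expect[T_{i+1}\mid\mathcal F_i] = T_i - T_i^2/\Tr T_i$. Since $A \mapsto A^2/\Tr A$ is operator convex --- it is the restriction to $t=\Tr A$ of the jointly operator‑convex map $(A,t)\mapsto t^{-1}A^2$ --- the operator Jensen inequality gives the deterministic Loewner recursion $E_{i+1} \preceq E_i - E_i^2/\Tr E_i$ with $E_0 = T$, which in particular preserves $E_i \preceq T$, hence $\lambda_j(E_i) \le \lambda_j$ for all $i,j$.

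\textbf{From the recursion to the bound.} Let $\sigma_i := \Tr E_i$. Tracing the recursion, $\sigma_{i+1} \le \sigma_i - \Tr(E_i^2)/\sigma_i$; bounding $\Tr(E_i^2) \ge \tfrac1r\bigl(\sum_{j\le r}\lambda_j(E_i)\bigr)^2 \ge \tfrac1r\bigl(\sigma_i - \sum_{j>r}\lambda_j\bigr)^2$ (valid while $\sigma_i \ge \sum_{j>r}\lambda_j$) shows that $\sigma_i$ contracts geometrically --- essentially at rate $1-1/r$ --- down to a constant multiple of $\sum_{j>r}\lambda_j$; a careful accounting of this ``burn‑in'' phase, using also $\lambda_1(E_i)\le\lambda_1$ while $\sigma_i$ is still large, yields the $r\log\!\bigl(2\lambda_1/(\delta\sum_{i>r}\lambda_i)\bigr)$ term. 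Once $\sigma_i$ is at that scale, the elementary pointwise estimate $\lambda_1(E_{i+1}) \le \lambda_1\bigl(E_i - E_i^2/\Tr E_i\bigr) \le \Tr(E_i)/4$ (the maximum of $\mu\mapsto\mu(1-\mu/\sigma_i)$ over $\mu\in[0,\sigma_i]$) transfers the smallness of the trace to the operator norm, after which the refined recursion $\lambda_1(E_{i+1}) \le \lambda_1(E_i)\bigl(1 - \lambda_1(E_i)/\Tr E_i\bigr)$ forces inverse‑linear decay $\lambda_1(E_i) \lesssim \sum_{j>r}\lambda_j / (\text{number of additional steps})$; about $2/\delta$ of these ``polishing'' steps bring $\lambda_1(E_i)$ below $\delta\sum_{j>r}\lambda_j$, which is the last term of \cref{eq:n_condition}. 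This spectral bookkeeping is exactly the content of (a continuum adaptation of) the analysis in \cite{CETW23}.

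\textbf{Main obstacle.} The crux is the last step: the Loewner recursion directly controls only the trace $\sigma_i$, and pushing the operator norm down to a small \emph{multiple} $\delta\sum_{i>r}\lambda_i$ of the spectral tail --- rather than merely to the tail itself --- requires simultaneously tracking the top‑$r$ and tail parts of the spectrum of $E_i$ and combining the geometric burn‑in with the inverse‑linear polishing phase to land the precise $(n,r,\delta)$ trade‑off in \cref{eq:n_condition}, including the tightening that replaces $\Tr T$ by $2\lambda_1$ inside the logarithm. A secondary but genuine technical point is making the operator‑valued recursion rigorous in the continuum (measurability of the random kernels $k_{\set{S}_i}$, Bochner integrability, validity of operator Jensen for trace‑class operators); this can be sidestepped by discretizing $\mu$, invoking the finite‑dimensional bound of \cite{CETW23}, and passing to the limit.
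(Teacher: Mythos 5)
Your first two stages reproduce the paper's argument: the reduction $\expect\Err^2(\set{S};g)\le\norm{g}_{\Ltwo(\mu)}^2\,\lambda_{\max}(\expect[T-T_{\set{S}}])$ via the Nystr\"om residual (your exact identity $\Err^2=\langle g,(T-T_{\set{S}})g\rangle$ is in fact correct and slightly sharper than the paper's inequality), and the one-step identity $\expect[T_{i+1}\mid\mathcal F_i]=T_i-T_i^2/\Tr T_i$ combined with operator concavity of $A\mapsto A-A^2/\Tr A$ to get the Loewner recursion for $E_i=\expect[T_i]$. Your route to concavity via joint operator convexity of $(A,t)\mapsto t^{-1}A^2$ is a legitimate alternative to the paper's explicit algebraic identity.

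The gap is in the endgame, which is the only delicate part and which you yourself flag as the ``main obstacle.'' Your two-phase plan (geometric contraction of the trace $\sigma_i$ down to $O(\sum_{j>r}\lambda_j)$, followed by inverse-linear ``polishing'' of $\lambda_1(E_i)$) does work qualitatively, but as sketched it does not land \cref{eq:n_condition}: the trace-contraction step $\sigma_{i+1}\le\sigma_i-(\sigma_i-\eta_r)^2/(r\sigma_i)$ contracts at rate roughly $1-1/(2r)$, so the burn-in costs about $2r\log(r\lambda_1/\eta_r)$ steps rather than $r\log(2\lambda_1/(\delta\eta_r))$, and no amount of ``careful accounting'' of this particular decomposition recovers the leading constant $r$ in front of the logarithm. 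The paper avoids tracking the trace as a separate dynamical quantity altogether: using $\lambda_j^{(i)}\le\lambda_j$ for $j>r$ and the preserved eigenvalue ordering, it bounds the trace \emph{inside} the top-eigenvalue recurrence by $r\lambda_1^{(i)}+\sum_{\ell>r}\lambda_\ell$ (see \cref{eq:trace_bound}), obtaining the single autonomous recurrence \cref{eq:recurrence_bound} for $\lambda_1^{(i)}$ alone. This recurrence is then dominated by the ODE $\dot x=-x^2/(rx+\sum_{\ell>r}\lambda_\ell)$, which integrates in closed form by separation of variables to give the hitting time $t_\star\le r\log(\lambda_1/a)+1/\varepsilon$; choosing $a=(\delta/2)\sum_{\ell>r}\lambda_\ell$ and $\varepsilon=\delta/2$ yields exactly \cref{eq:n_condition}. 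In effect, the paper's single recurrence performs your burn-in and polishing phases simultaneously, and the ODE comparison is the device that turns the discrete recursion into the precise $(n,r,\delta)$ trade-off. The secondary issues you raise (measurability, operator Jensen in the continuum) are handled implicitly in the paper and are not where the difficulty lies.
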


To illustrate this result, consider an RKHS with eigenvalues $\lambda_i = \order(i^{-2s})$ for $s > 1/2$.
An example is the periodic Sobolev space $H^s_{\rm per}([0,1])$ (see \cref{sec:experiments}).
By setting $\delta = 1/r$, we see that 
\begin{equation}
    \expect \Err^2(\set{S};g) \le \order(r^{-2s}) \norm{g}_{\Ltwo(\mu)}^2 \quad \text{for \RPCholesky with } n = \Omega(r \log r).
\end{equation}
The optimal scheme requires $n = \Theta(r)$ nodes to achieve this bound \cite[Prop.~5 \& \S2.5]{BBC20}.
Thus, to achieve an error of $\order(r^{-2s})$, \textbf{\RPCholesky requires just logarithmically more nodes than the optimal quadrature scheme for such spaces.}
This compares favorably to continuous volume sampling, which achieves the slightly better optimal rate but is much more difficult to sample.

Having established that \RPCholesky achieves nearly optimal error rates for interesting RKHS's, we make some general comments on \cref{thm:rpc}.
First, observe that the error depends on the sum $\sum_{i=r+1}^\infty \lambda_i$ of the tail eigenvalues.
This tail sum is characteristic of spectrally accurate kernel quadrature schemes \cite{BBC19a,BBC20}.
The more distinctive feature in \cref{eq:n_condition} is the logarithmic factor.
Fortunately, for double precision computation, the achieveable accuracy is bounded by the machine precision $2^{-52}$, so this logarithmic factor is effectively a modest constant:
\begin{equation}
    \log\left(\frac{2\lambda_1}{\delta \sum_{i=r+1}^\infty}\right)\lessapprox \log(2^{52}) < 37.
\end{equation}

\subsection{Connection to Nystr\"om approximation and idea of proof} \label{sec:proof_idea}

We briefly outline the proof of \cref{thm:rpc}.
Following previous works (e.g., \cite{bach_lever,hayakawa2023samplingbased}), we first utilize the connection between kernel quadrature and the Nystr\"om approximation \cite{Nys30,WS00} of the kernel $k$.

\begin{definition} \label{def:nystrom}
    For nodes $\set{S} = \{s_1,\ldots,s_n\} \subseteq \set{X}$, the \emph{Nystr\"om approximation} to the kernel $k$ is
    \begin{equation} \label{eq:nystrom_kernel}
        k_\set{S}(x,y) = k(x,\set{S}) \, k(\set{S},\set{S})^\dagger \, k(\set{S},y).
    \end{equation}
    Here, ${}^\dagger$ denotes the Moore--Penrose pseudoinverse.
    The Nystr\"om approximate integral operator is 
    \begin{equation}
        T_{\set{S}}f \coloneqq \int_{\set{X}} k_{\set{S}}(\cdot,x) f(x) \, \d\mu(x).
    \end{equation}
\end{definition}

This definition leads to a formula for the quadrature rule $\sum_{i=1}^n w_i k(\cdot,s_i)$ with optimal weights \cref{eq:weights}.

\begin{proposition} \label{prop:error_formula}
    Fix nodes $\set{S} = \{s_1,\ldots,s_n\}$.
    With the optimal weights $\vec{w}_\star \in \real^n$ \cref{eq:weights}, we have
    \begin{equation} \label{eq:weights_nystrom}
        \sum_{i=1}^n w_{i\star} k(\cdot,s_i) = T_{\set{S}}g.
    \end{equation}
    Consequently,
    \begin{equation} \label{eq:err_nystrom}
        \Err^2(\set{S};g) = \norm{(T-T_{\set{S}})g}_{\RKHS}^2 \le \langle g, (T-T_{\set{S}})g\rangle_{\Ltwo(\mu)}.
    \end{equation}
\end{proposition}

To finish the proof of \cref{thm:rpc}, we develop and solve a recurrence for an upper bound on the largest eigenvalue of $\expect[T - T_{\set{S}}]$. See \cref{sec:proof_of_rpc} for details and for the proof of \cref{prop:error_formula}.

\section{Efficient randomly pivoted Cholesky by rejection sampling}
\label{sec:rejection}

To efficiently perform \RPCholesky sampling in the continuous setting, we can use \emph{rejection sampling}.
(A similar rejection sampling idea is used in the MCMC continuous volume sampler of \cite{RO19}.)
Assume for simplicity that the measure $\mu$ is normalized so that
\begin{equation}
    \int_{\set{X}} k(x,x) \, \d\mu(x) = \Tr T = 1.
\end{equation}
We assume two forms of access to the kernel $k$:
\begin{enumerate}
    \item \textbf{Entry evaluations.} For any $x,y \in \set{X}$, we can evaluate $k(x,y)$.
    \item \textbf{Sampling the diagonal.} We can produce samples from the measure $k(x,x) \, \d\mu(x)$.
\end{enumerate}
To sample from the \RPCholesky distribution, we use $k(x,x) \, \d\mu(x)$ as a proposal distribution and accept proposal $s$ with probability 
\begin{equation}
    1 - \frac{k_{\set{S}}(s,s)}{k(s,s)}.
\end{equation}
(Recall $k_{\set{S}}$ from \cref{eq:nystrom_kernel}.)
The resulting algorithm for \RPCholesky sampling is shown in \cref{alg:rpc_rejection}.

\begin{algorithm}[ht]
	\caption{\RPCholesky with rejection sampling \label{alg:rpc_rejection}}
	\begin{algorithmic}[1]
		\Require Kernel $k : \set{X} \times \set{X} \to \real$ and number of quadrature points $n$
		\Ensure Quadrature points $s_1,\ldots,s_n$
        \State Initialize $\mat{L} \leftarrow \mat{0}_{n\times n}$, $i\gets 1$, $\set{S} \leftarrow \emptyset$ \Comment{$\mat{L}$ stores a Cholesky decomposition of $k(\set{S},\set{S})$}
        \While{$i \le n$}
            \State Sample $s_i$ from the probability measure $k(x,x) \, \d\mu(x)$
            \State $(d,\vec{c}) \leftarrow \Call{ResidualKernel}{s_i,\set{S},\mat{L},k}$ \Comment{Helper subroutine \cref{alg:helper}}
            \State Draw a uniform random variable $U \sim \Unif[0,1]$
            \If{$U < d/k(s_i,s_i)$} \Comment{Accept with probability $d/k(s_i,s_i)$}
             \State Induct $s_i$ into the selected set: $\set{S} \gets \set{S} \cup \{s_i\}$, $i \leftarrow i+1$
             \State $\mat{L}(i,1:i) \leftarrow \begin{bmatrix} \vec{c}^\top & \sqrt{d}\end{bmatrix}$
            \EndIf 
        \EndWhile
	\end{algorithmic}
\end{algorithm}

\begin{algorithm}[ht]
	\caption{Helper subroutine to evaluate residual kernel \label{alg:helper}}
	\begin{algorithmic}[1]
		\Require Point $s \in \set{X}$, set $\set{S} \subseteq \set{X}$, Cholesky factor $\mat{L}$ of $k(\set{S},\set{S})$,  and kernel $k$
		\Ensure $d = k(s,s) - k_{\set{S}}(s,s)$ and $\vec{c} = \mat{L}^{-1}k(\set{S},s)$
        \Procedure{ResidualKernel}{$s$,$\set{S}$,$\mat{L}$,$k$}
        \State $\vec{c} \leftarrow \mat{L}^{-1} k(\set{S},s)$
        \State $d \leftarrow k(s,s) - \norm{\vec{c}}^2$ \Comment{$d = k(s,s) - k_{\set{S}}(s,s)$}
        \State \Return $(d,\vec{c})$
        \EndProcedure
	\end{algorithmic}
\end{algorithm}

\begin{theorem} \label{thm:rejection}
    \Cref{alg:rpc_rejection} produces exact \RPCholesky samples.
    Let $\eta_i$ denote the trace-error of the best rank-$i$ approximation to $T$:
    \begin{equation}
        \eta_i = \sum_{j=i+1}^\infty \lambda_i.
    \end{equation}
    The expected runtime of \cref{alg:rpc_rejection} is at most $\order(\sum_{i=1}^n i^2 /\eta_i) \le \order(n^3/\eta_{n-1})$.
\end{theorem}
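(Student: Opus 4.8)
The plan is to treat the two assertions separately. \emph{Correctness.} I would first recall that, by definition, the $i$-th \RPCholesky node is drawn from the \emph{residual} kernel diagonal, normalized: after selecting $\set{S}$, the rank-one updates in \cref{alg:rpc_unoptimized} turn $k$ into $k - k_{\set{S}}$, the Nyström residual, a fact that follows by induction on $|\set{S}|$ from the quotient (Schur-complement) identity for iterated Gaussian elimination (or may be cited from \cite{CETW23}). Hence the $i$-th node has target density proportional to $(k - k_{\set{S}})(x,x)$ against $\mu$, with $\set{S}$ the current set of $i-1$ nodes. Next I would check the two loop invariants of \cref{alg:rpc_rejection}: (i) $\mat{L}$ is always a Cholesky factor of $k(\set{S},\set{S})$ — immediate from the block-Cholesky formula, which simultaneously gives $d = k(s,s) - \norm{\vec{c}}^2 = k(s,s) - k_{\set{S}}(s,s) = (k - k_{\set{S}})(s,s)$; and (ii) $0 \le k_{\set{S}}(s,s) \le k(s,s)$, since $k_{\set{S}}$ and $k - k_{\set{S}}$ are both positive-semidefinite kernels (they are Gram kernels of $P_{\set{S}}k(\cdot,x)$ and $(I - P_{\set{S}})k(\cdot,x)$ in $\RKHS$, where $P_{\set{S}}$ projects onto $\operatorname{span}\{k(\cdot,s_j)\}$). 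Thus the acceptance probability $d/k(s,s) = 1 - k_{\set{S}}(s,s)/k(s,s)$ lies in $[0,1]$, and the standard rejection-sampling identity shows that a proposal accepted at step $i$ has density proportional to $k(x,x)\cdot\bigl(1 - k_{\set{S}}(x,x)/k(x,x)\bigr) = (k - k_{\set{S}})(x,x)$, i.e.\ exactly the \RPCholesky target; since $\Tr(T - T_{\set{S}}) > 0$ whenever $|\set{S}|$ is below the rank of $T$, each step terminates almost surely.

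\emph{Runtime.} I would group the while-loop iterations by the value $i \in \{1,\ldots,n\}$ of the counter, calling these the trials ``at level $i$''; during level $i$ one has $|\set{S}| = i-1$. Each such trial costs $\order(i^2)$: a triangular solve $\vec{c} = \mat{L}^{-1}k(\set{S},s)$ with $\mat{L}$ of size $(i-1)\times(i-1)$, together with $\order(i)$ kernel evaluations and $\order(i)$ further arithmetic (proposal sampling is counted as unit cost). The number of trials at level $i$ is geometric with success probability equal to the acceptance probability, which integrates to $\int_{\set{X}}\bigl(1 - k_{\set{S}}(x,x)/k(x,x)\bigr)k(x,x)\,\d\mu(x) = \Tr T - \Tr T_{\set{S}} = \Tr(T - T_{\set{S}})$, using the normalization $\Tr T = 1$. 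The crucial point is a \emph{uniform} lower bound $\Tr(T - T_{\set{S}}) \ge \eta_{i-1}$ valid for every realization of $\set{S}$: since $T_{\set{S}} \preceq T$ and $\operatorname{rank} T_{\set{S}} \le i-1$, Weyl's monotonicity gives $\Tr T_{\set{S}} = \sum_{j=1}^{i-1}\lambda_j(T_{\set{S}}) \le \sum_{j=1}^{i-1}\lambda_j$, hence $\Tr(T - T_{\set{S}}) \ge \sum_{j \ge i}\lambda_j = \eta_{i-1}$. Therefore the conditional (and so unconditional) expected number of trials at level $i$ is at most $1/\eta_{i-1}$, and by the tower property the total expected cost is $\sum_{i=1}^n \order(i^2)\cdot(1/\eta_{i-1}) = \order\bigl(\sum_{i=1}^n i^2/\eta_{i-1}\bigr)$. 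Reindexing and using monotonicity of $\eta_i$ (so $\eta_{i-1}\ge\eta_{n-1}$ for $i\le n$, and $(i+1)^2 \le 4i^2$) converts this to both stated forms, $\order\bigl(\sum_{i=1}^n i^2/\eta_i\bigr)$ and $\order(n^3/\eta_{n-1})$.

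The main obstacle is establishing and correctly using the uniform bound $\Tr(T - T_{\set{S}}) \ge \eta_{|\set{S}|}$: it must hold for \emph{every} admissible node set, not merely in expectation, so that the per-level estimate $\expect[\#\text{trials at level }i] \le 1/\eta_{i-1}$ is legitimate under the tower property. This is exactly where the operator inequality $T_{\set{S}}\preceq T$, the rank bound $\operatorname{rank} T_{\set{S}} \le |\set{S}|$, and eigenvalue monotonicity enter, and it also requires confirming that $T_{\set{S}}$ is trace-class in the continuum (which follows from $0 \preceq T_{\set{S}} \preceq T$ with $T$ trace-class). The remaining ingredients are comparatively routine: the Schur-complement identity identifying the iterated rank-one residual with $k - k_{\set{S}}$, the block-Cholesky update for $\mat{L}$, and dispatching the measure-zero event that a proposal lands where the residual diagonal vanishes.
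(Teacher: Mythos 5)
Your proposal is correct and follows essentially the same route as the paper: the same rejection-sampling correctness argument (via the Nystr\"om-residual identity and the Cholesky invariant for $\mat{L}$) and the same runtime bound via geometric trial counts with acceptance probability at least $\eta_{i-1}$. If anything, you are more careful than the paper's write-up in two spots it glosses over: you make explicit that the bound $\Tr(T-T_{\set{S}})\ge\eta_{i-1}$ must hold pointwise for every realization of $\set{S}$ (not merely in expectation, which by Jensen would not suffice), and you supply the Weyl-monotonicity justification for that bound.
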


This result demonstrates that \cref{alg:rpc_rejection} suffers from the \emph{curse of smoothness}: The faster the eigenvalues $\lambda_1,\lambda_2,\ldots$ decrease, the smaller $\eta_{n-1}$ will be and, consequently, the slower \cref{alg:rpc_rejection} will be in expectation.
While this curse is an unfortunate limitation, it is also a common one. The curse of smoothness affects all known Mercer-free, spectrally accurate kernel quadrature schemes.
In fact, the situation is worse for other algorithms.
The CVS sampler of \cite{RO19}, for example, requires as many as $\order(n^5\log n)$ MCMC steps, each of which has cost $\order(n^2/\eta_{n-1})$.
According to current analysis, \cref{alg:rpc_rejection} is $\order(n^4 \log n)$-times faster than the CVS sampler of \cite{RO19}.

Notwithstanding the curse of smoothness, \cref{alg:rpc_rejection} is useful in practice. 
The algorithm works under minimal assumptions and can reach useful accuracies $\eta = 10^{-5}$ while sampling $n = 1000$ nodes on a laptop.
\Cref{alg:rpc_rejection} may also be interesting for \RPCholesky sampling for a large finite set $|\set{X}| \ge 10^9$, since its runtime has no explicit dependence on the size of the space $\set{X}$.

To achieve higher accuracies and blunt the curse of smoothness, we can improve \cref{alg:rpc_rejection} with optimization.
Indeed, the optimal acceptance probability for \cref{alg:rpc_rejection} would be
\begin{equation}
    p(s_i;\alpha) = \frac{1}{\alpha} \cdot \frac{k(s_i,s_i) - k_{\set{S}}(s_i,s_i)}{k(s_i,s_i)} \quad \text{where} \quad \alpha = \max_{x \in \set{X}} \frac{k(x,x) - k_{\set{S}}(x,x)}{k(x,x)}.
\end{equation}
This suggests the following scheme: Initialize with $\alpha = 1$ and run the algorithm with acceptance probability $p(s_i;\alpha)$.
If we perform many loop iterations without an acceptance, we then recompute the optimal $\alpha$ by solving an optimization problem.
The resulting procedure is shown in \cref{alg:rpc_rejection+optimization}.

\begin{algorithm}[ht]
	\caption{\RPCholesky with optimized rejection sampling \label{alg:rpc_rejection+optimization}}
	\begin{algorithmic}[1]
		\Require Kernel $k : \set{X} \times \set{X} \to \real$ and number of quadrature points $n$
		\Ensure Quadrature points $s_1,\ldots,s_n$
        \State Initialize $\mat{L} \leftarrow \mat{0}_{n\times n}$, $i\gets 1$, $\set{S} \leftarrow \emptyset$, $\mathtt{trials} \gets 0$, $\alpha \gets 1$
        \While{$i \le n$}
            \State $\mathtt{trials} \gets \mathtt{trials} + 1$
            \State Sample $s_i$ from the probability measure $k(x,x) \, \d\mu(x)$
            \State $(d,\vec{c}) \leftarrow \Call{ResidualKernel}{s_i,\set{S},\mat{L},k}$ \Comment{Helper subroutine \cref{alg:helper}}
            \State Draw a uniform random variable $U \sim \Unif[0,1]$
            \If{$U < (1/\alpha) \cdot d/k(s_i,s_i)$} \Comment{Accept with probability $d/k(s_i,s_i)$}
             \State $\set{S} \gets \set{S} \cup \{s_i\}$, $i \leftarrow i+1$, $\mathtt{trials} \gets 0$
             \State $\mat{L}(i,1:i) \leftarrow \begin{bmatrix} \vec{c}^\top & \sqrt{d}\end{bmatrix}$
            \EndIf 
            \If{$\mathtt{trials} \ge \mathtt{TRIALS\_MAX}$} \Comment{We use $\mathtt{TRIALS\_MAX} \in [25,1000]$}
            \State $\alpha \gets \max_{x \in \set{X}} \Call{ResidualKernel}{x,\set{S},\mat{L},k}/k(x,x)$
            \State $\mathtt{trials} \gets 0$
            \EndIf
        \EndWhile
	\end{algorithmic}
\end{algorithm}

If the optimization problem for $\alpha$ is solved to global optimality, then \cref{alg:rpc_rejection+optimization} produces exact \RPCholesky samples.
The downside of \cref{alg:rpc_rejection+optimization} is the need to solve a (typically) nonconvex global optimization problem to compute the optimal $\alpha$.
Fortunately, in our experiments (\cref{sec:experiments}), only a small number of optimization problems ($\le10$) are needed to produce a sample of $n=1000$ nodes.
In the setting where the optimization problem is tractable, the speedup of \cref{alg:rpc_rejection+optimization} can be immense.
To produce $n = 200$ \RPCholesky samples for the space $[0,1]^3$ with the kernel \cref{eq:3d_kernel}, \cref{alg:rpc_rejection+optimization} requires just $0.19$ seconds compared to $7.47$ seconds for \cref{alg:rpc_rejection}.

\section{Comparison of methods on a benchmark example}
\label{sec:experiments}

Having demonstrated the versatility of \RPCholesky for general spaces $\set{X}$, measures $\mu$, and kernels $k$ in \cref{fig:moon}, we now present a benchmark example from the kernel quadrature literature to compare \RPCholesky with other methods.
Consider the periodic Sobolev space $H^s_{\rm per}([0,1])$ with kernel
\begin{equation}
    k(x,y) = 1 + 2\sum_{m = 1}^\infty m^{-2s} \cos(2\pi m(x-y)) = 1 + \frac{(-1)^{s-1}(2\pi)^{2s}}{(2s)!}B_{2s}(\{x-y\}),
\end{equation}
where $B_{2s}$ is the $(2s)$th Bernoulli polynomial and $\{\cdot\}$ reports the fractional part of a real number \cite[p.~7]{bach_lever}.
We also consider $[0,1]^3$ equipped with the product kernel
\begin{equation} \label{eq:3d_kernel}
    k^{\otimes 3}((x_1,x_2,x_3),(y_1,y_2,y_3)) = k(x_1,y_1) k(x_2,y_2)k(x_3,y_3).
\end{equation}
We quantify the performance of nodes $\set{S}$ and weights $\vec{w}$ using $\Err(\set{S},\vec{w};g)$, which can be computed in closed form \cite[Eq.~(14)]{PWKQ}.
We set $\mu \coloneqq \Unif[0,1]^d$ and $g\equiv 1$.

We compare the following schemes:
\begin{itemize}
    \item \emph{Monte Carlo, IID kernel quadrature.} Nodes $s_1,\ldots,s_n \stackrel{\mathrm{iid}}{\sim} \mu$ with uniform weights $w_i = 1/n$ (Monte Carlo) and optimal weights \cref{eq:weights} (IID).
    \item \emph{Thinning.} Nodes $s_1,\ldots,s_n$ thinned from $n^2$ iid samples from $\mu = \Unif[0,1]$ using kernel thinning \cite{DM21,DM22} with the \textsc{Compress}\texttt{++} algorithm \cite{SDM22} with optimal weights \cref{eq:weights}.
    \item \emph{Continuous volume sampling (CVS).} Nodes $s_1,\ldots,s_n$ drawn from the volume sampling distribution \cite[Def.~1]{BBC20} by Markov chain Monte Carlo with optimal weights \cref{eq:weights}.
    \item \emph{\RPCholesky.} Nodes $s_1,\ldots,s_n$ sampled by \RPCholesky using \cref{alg:rpc_rejection+optimization} with the optimal weights \cref{eq:weights}.
    \item \emph{Positively weighted kernel quadrature (PWKQ).} Nodes and weights computed by the Nystr\"om+empirical+opt method as described on \cite[p.~9]{PWKQ}.
\end{itemize}
See \cref{sec:numerical_experiment_details} for more details about our numerical experiments.
Experiments were run on a MacBook Pro with a 2.4 GHz 8-Core Intel Core i9 CPU and 64 GB 2667 MHz DDR4 RAM.
Our code is available at \url{https://github.com/eepperly/RPCholesky-Kernel-Quadrature}.

\begin{figure}[ht]
    \centering
    
    \begin{subfigure}{0.98\textwidth}
        \centering 
        \includegraphics[height=1.9in]{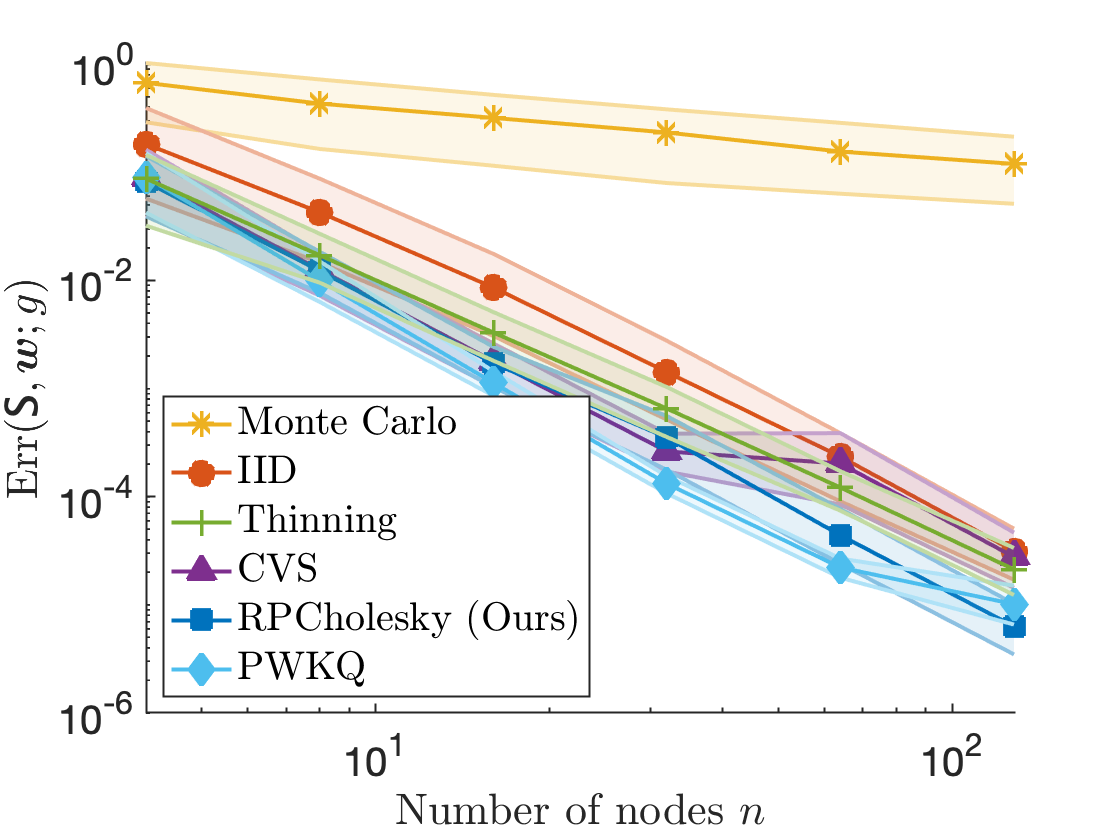} ~ \includegraphics[height=1.9in]{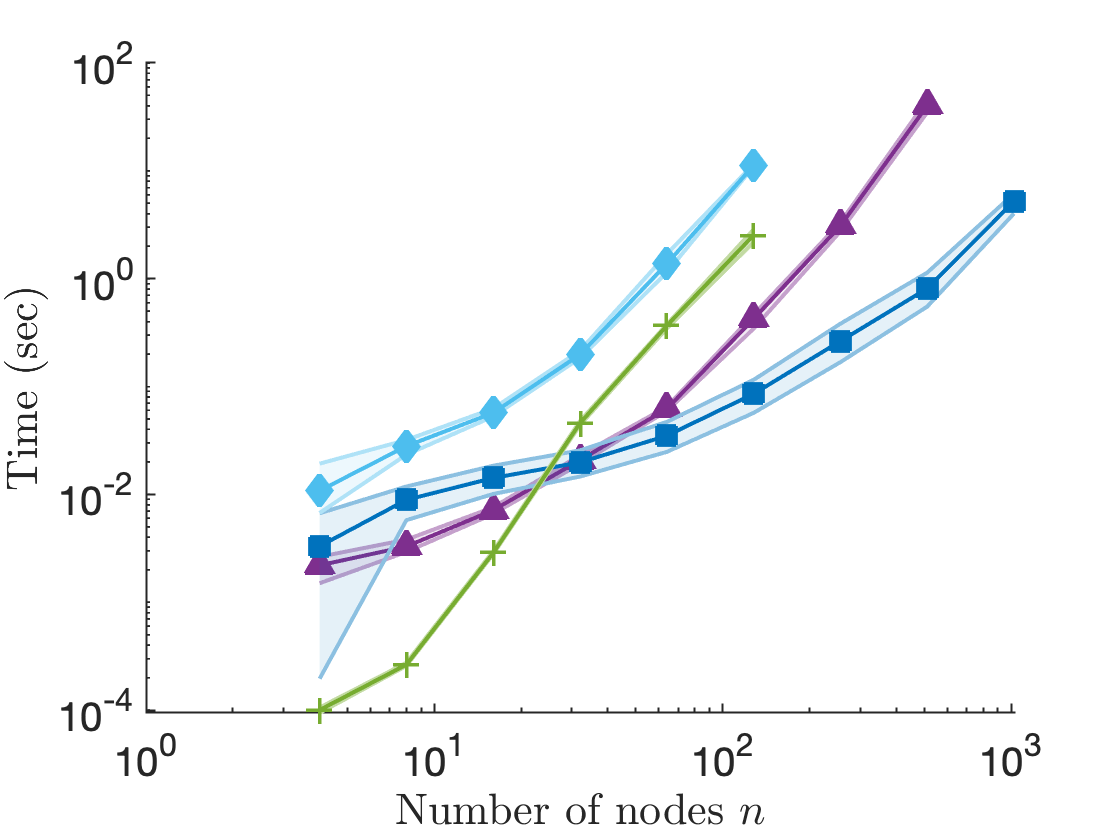}
        \caption{$s=3$, $d = 1$}
    \end{subfigure}
    
    \begin{subfigure}{0.98\textwidth}
        \centering \includegraphics[height=1.9in]{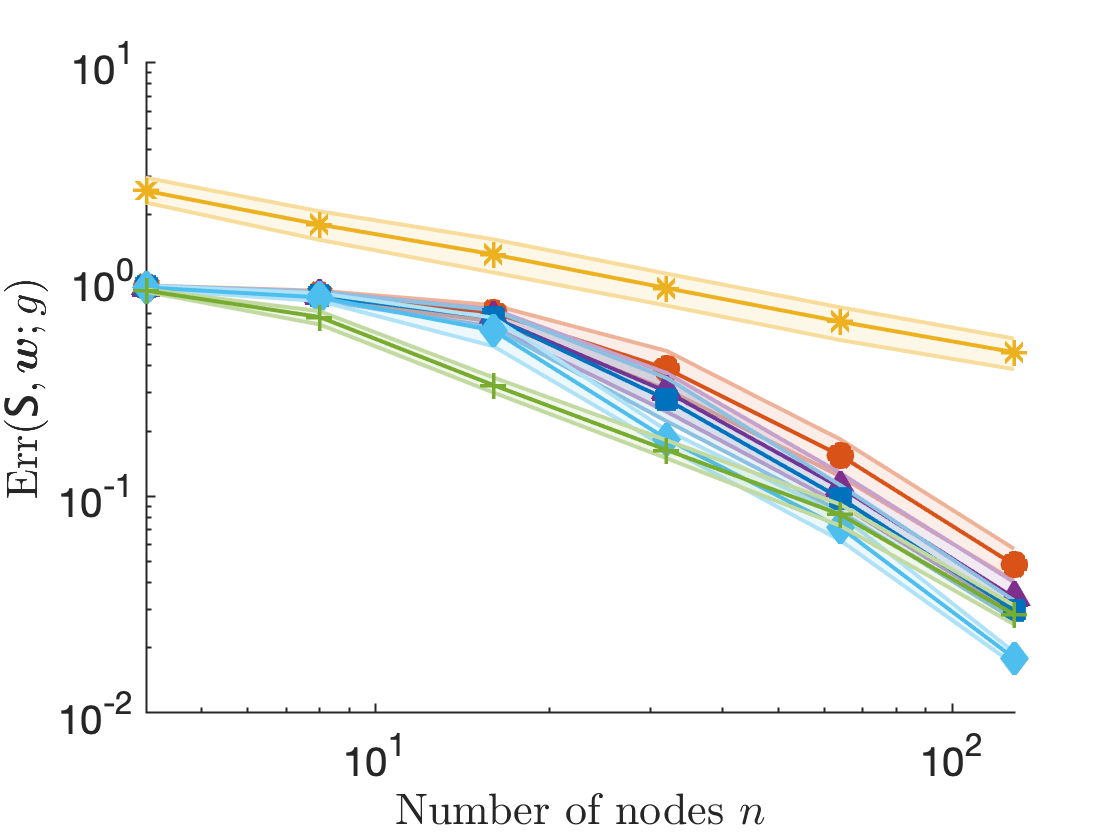} ~ \includegraphics[height=1.9in]{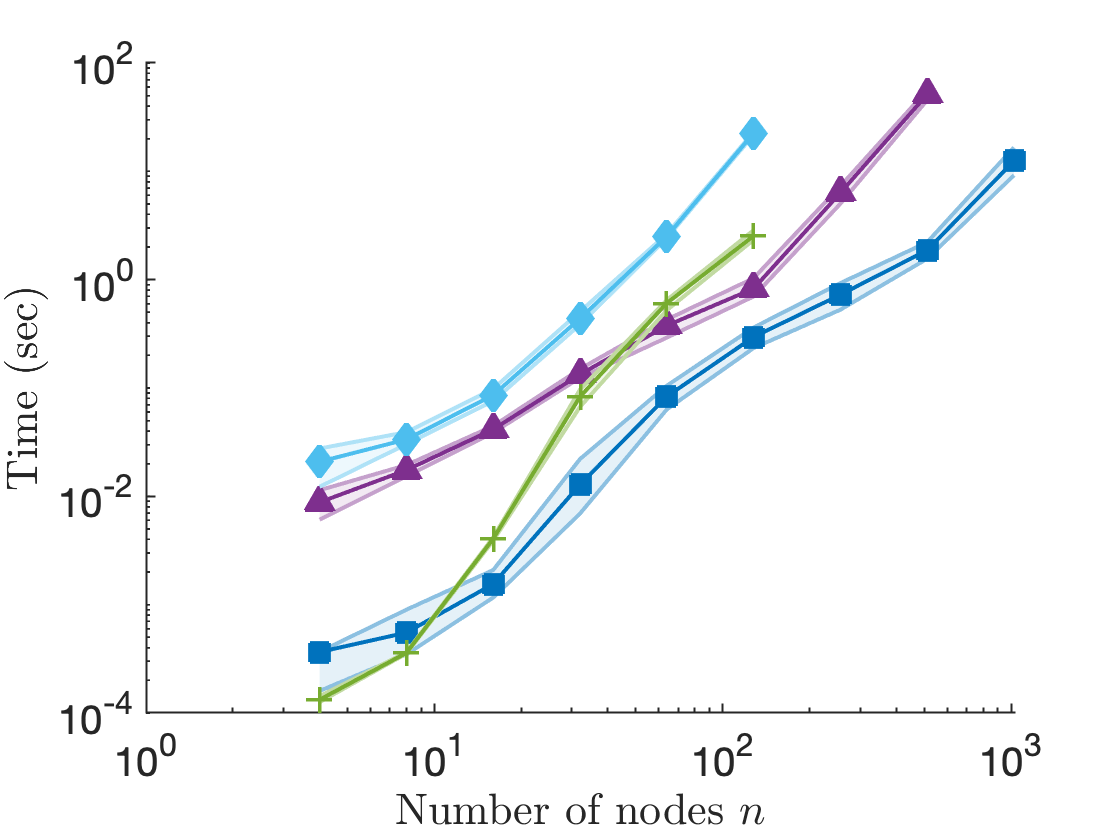}
        \caption{$s=3$, $d = 3$}
    \end{subfigure}
    \caption{\textbf{Benchmark example: Sobolev space.} Mean quadrature error $\Err(\set{S},\vec{w};g)$ (\emph{left}) and sampling time (\emph{right}) for different methods (100 trials) for $s = 1$, $d = 3$ (\emph{top}) and $s = d = 3$ (\emph{bottom}). Shaded regions show 10\%/90\% quantiles.}\label{fig:benchmark}
\end{figure}

Errors for the different methods are shown in \cref{fig:benchmark} (left panels).
We see that \RPCholesky consistently performs among the best methods at every value of $n$, numerically achieving the rate of convergence of the fastest method for each problem.
Particularly striking is the result that \RPCholesky sampling's performance is either very close to or better than that of continuous volume sampling, despite the latter's slightly stronger theoretical properties.

\Cref{fig:benchmark} (right panels) show the sampling times for each of the nonuniform sampling methods.
\RPCholesky sampling is the fastest by far.
To sample 128 nodes for $s = 3$ and $d=1$, \RPCholesky was \textbf{17$\times$ faster than continuous volume sampling, 52$\times$ faster than thinning, and 170$\times$ faster than PWKQ.}

To summarize our numerical evaluation (comprising \cref{fig:benchmark} and \cref{fig:moon} from the introduction), we find that \RPCholesky is among the most accurate kernel quadrature methods tested.
To us, the strongest benefits of \RPCholesky (supported by these experiments) are the method's speed and flexibility. These virtues make it possible to apply spectrally accurate kernel quadrature in scenarios where it would have been computationally intractable before.  

\section{Application: Analyzing large chemistry datasets}

While our focus thusfar has been on infinite domains $\set{X}$, the kernel quadrature formalism can also be applied to a finite set $\set{X}$ of data points.
Let $\mu = \Unif \set{X}$ be the uniform distribution and set $g\equiv 1$.
Here, the task is to exhibit $n$ nodes $\set{S} = \{s_1,\ldots,s_n\} \subseteq \set{X}$ and weights $\vec{w}\in\real^n$ such that the average of every ``smooth'' function $f : \set{X} \to \real$ over the whole dataset is well-approximated by a sum:
\begin{equation} \label{eq:mean_f}
    \mean(f) \coloneqq \frac{1}{|\set{X}|} \sum_{x\in \set{X}} f(x) \approx \sum_{i = 1} w_i f(s_i).
\end{equation}

Here is an application to chemistry.
Let $\set{X}$ denote a large set of compounds of interest, and let $f : \set{X} \to \real$ denote a target chemical property such as specific heat capacity.
We are interested in computing $\mean(f)$.
Unfortunately, evaluating $f$ on each $x \in \set{X}$ requires an expensive density functional theory computation, so it can be prohibitively expensive to evaluate $f$ on every $x\in\set{X}$.
Fortunately, we can obtain fast approximations to $\mean(f)$ using \cref{eq:mean_f}, which only require evaluating $f$ on a much smaller set of size $|\set{S}| \ll |\set{X}|$.

Our experimental setup is as follows.
For $\set{X}$, we use $2\times 10^4$ randomly selected points from the QM9 dataset \cite{RvBR12,RDRv14,STR+19}. 
We represent each compound as a vector in $\real^{1500}$ using the many-body tensor representation \cite{HR22} computed with     the DScribe package \cite{HJM+20}.
Choose $k$ to be a Gaussian kernel with bandwidth chosen by median heuristic \cite{GJK18} on a further subsample of $10^3$ random points.
We omit continuous volume sampling, thinning, and positively weighted kernel quadrature because of their computational cost.
In their place, we add the greedy Nystr\"om method \cite{FS01} with optimal weights \cref{eq:weights}.

\begin{figure}[ht]
    \centering
    
    \begin{subfigure}{0.485\textwidth}
        \centering 
        \includegraphics[height=2in]{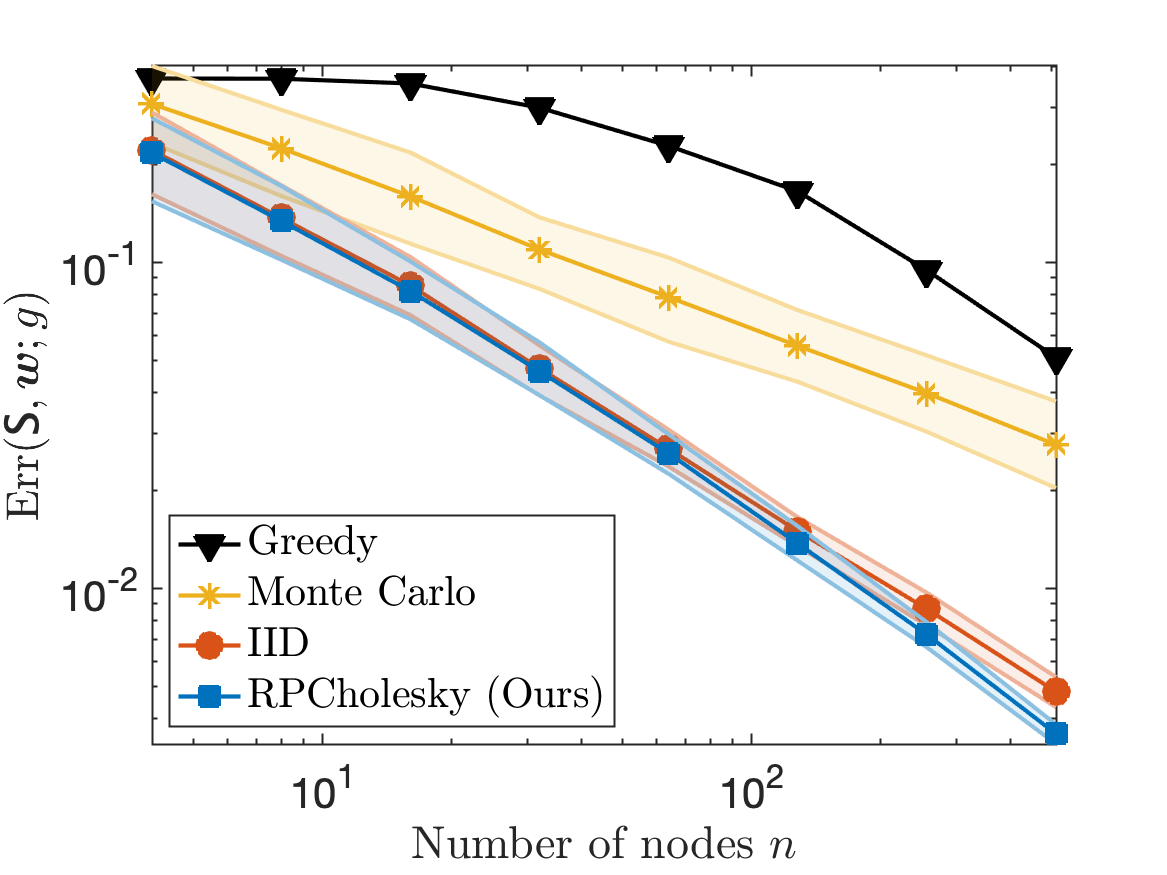}
        \caption{}\label{fig:chemistry_worst}
    \end{subfigure}
    \begin{subfigure}{0.485\textwidth}
        \centering \includegraphics[height=2in]{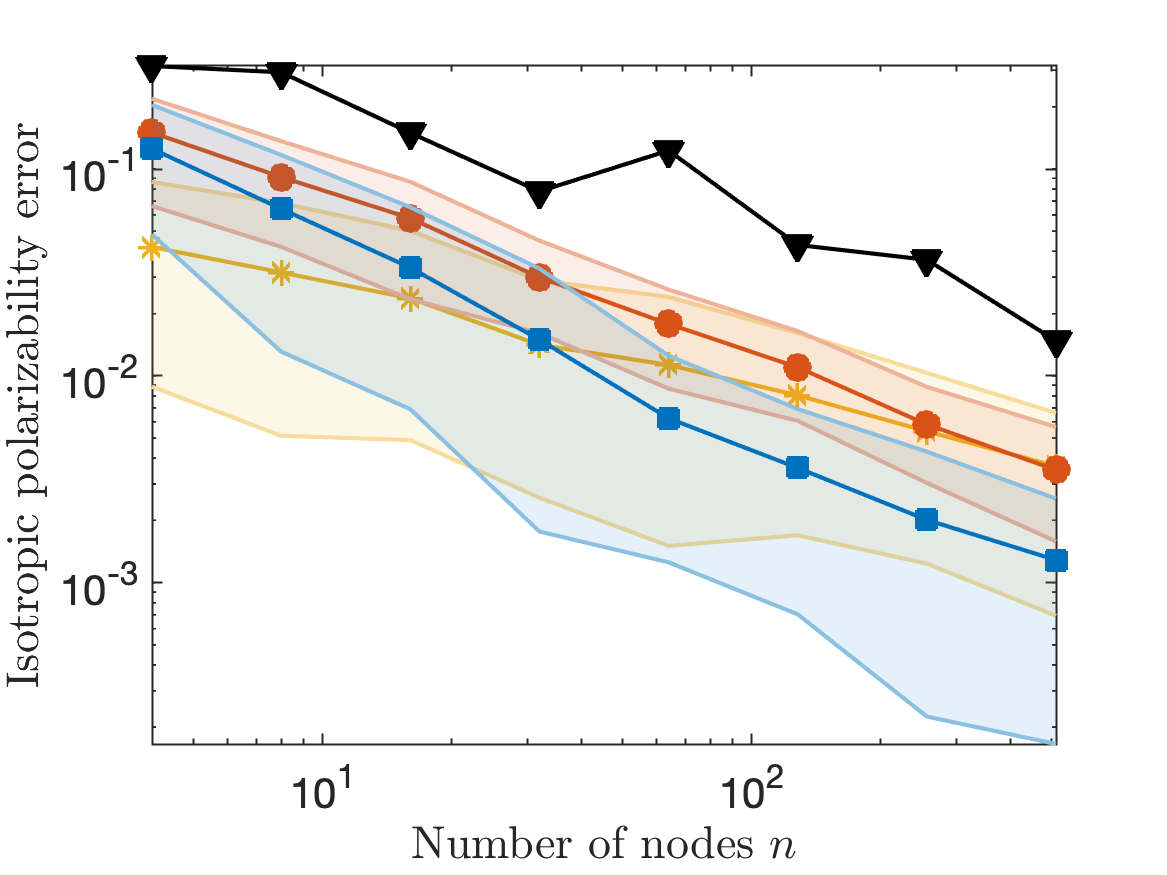}
        \caption{}\label{fig:chemistry_polarizability}
    \end{subfigure}
    \caption{\textbf{Application: chemistry.} Worst-case quadrature error (\emph{left}) and mean relative error for estimation of the average value of the isotropic polarizability function $f(x)$  (\emph{right}) for different methods (100 trials). Shaded regions show 10\%/90\% quantiles.}\label{fig:chemistry}
\end{figure}

Results are shown in \cref{fig:chemistry}.
\Cref{fig:chemistry_worst} shows the worst-case quadrature error \cref{eq:error_def}.
For this metric, IID and randomly pivoted Cholesky are the definitive winners, with randomly pivoted Cholesky being slightly better.
\Cref{fig:chemistry_polarizability} shows the mean relative error for the kernel quadrature estimates of the mean \emph{isotropic polarizability} of the compounds in $\set{X}$.
For sufficiently large $n$, randomly pivoted Cholesky has the smallest error, beating IID and Monte Carlo by a factor of three at $n=512$.

\section{Conclusions, Limitations, and Possibilites for Future Work}

In this article, we developed continuous \RPCholesky sampling for kernel quadrature. \Cref{thm:rpc} demonstrates \RPCholesky kernel quadrature achieves near-optimal error rates.
Numerical results (\cref{fig:benchmark}) hint that its practical performance might be even better than that suggested by our theoretical analysis and fully comparable with kernel quadrature based on the much more computationally expensive continuous volume sampling distribution.
\RPCholesky supports performant rejection sampling algorithms (\cref{alg:rpc_rejection,alg:rpc_rejection+optimization}), which facilitate implementation for general spaces $\set{X}$, measures $\mu$, and kernels $k$ with ease.

We highlight three limitations of \RPCholesky kernel quadrature that would be worth addressing in future work.
First, given the comparable performance of \RPCholesky and continuous volume sampling in practice, it would be desirable to prove stronger error bounds for \RPCholesky sampling or find counterexamples which demonstrate a separation between the methods.
Second, it would be worth developing improved sampling algorithms for \RPCholesky which avoid the need for global optimization steps.
Third, all known spectrally accurate kernel quadrature methods require integrals of the form $\int_{\set{X}} k(x,y) g(y) \, \d\mu(y)$, which may not be available.
\RPCholesky kernel quadrature requires them for the computation of the weights \cref{eq:weights}.
Developing spectrally accurate kernel quadrature schemes that avoid such integrals remains a major open problem for the field.

\newpage

\section*{Acknowledgements}

We thank Lester Mackey, Eliza O'Reilly, Joel Tropp, and Robert Webber for helpful discussions and feedback.

\textit{Disclaimer.} This report was prepared as an account of work sponsored by an agency of the United States Government. Neither the United States Government nor any agency thereof, nor any of their employees, makes any warranty, express or implied, or assumes any legal liability or responsibility for the accuracy, completeness, or usefulness of any information, apparatus, product, or process disclosed, or represents that its use would not infringe privately owned rights. Reference herein to any specific commercial product, process, or service by trade name, trademark, manufacturer, or otherwise does not necessarily constitute or imply its endorsement, recommendation, or favoring by the United States Government or any agency thereof. The views and opinions of authors expressed herein do not necessarily state or reflect those of the United States Government or any agency thereof.

\bibliographystyle{plain}
\bibliography{refs}

\newpage

\appendix

\newcommand{\twobyone}[2]{\begin{bmatrix} #1 \\ #2 \end{bmatrix}}
\newcommand{\onebytwo}[2]{\begin{bmatrix} #1 & #2 \end{bmatrix}}
\newcommand{\twobytwo}[4]{\begin{bmatrix} #1 & #2 \\ #3 & #4 \end{bmatrix}}
\newcommand{\Id}{\mathbf{I}}

\section{Proof of \texorpdfstring{\cref{thm:rpc}}{theorem 1}} \label{sec:proof_of_rpc}

We start off by proving \cref{prop:error_formula}:

\begin{proof}[Proof of~\cref{prop:error_formula}]
Equation~\cref{eq:weights_nystrom} follows from a direct calculation:
\begin{align}
    \sum_{i=1}^n w_i k(\cdot,s_i) &= k(\cdot,\set{S})\vec{w} = \int_{\set{X}} k(\cdot,\set{S}) k(\set{S},\set{S})^{-1} k(\set{S},x) g(x) \, \d\mu(x) = T_{\set{S}}g.
\end{align}
To establish the second claim, use the fact \cite[\S2.1]{bach_lever} that $T^{-1/2} : \RKHS \to \Ltwo(\mu)$ is an isometry and invoke the definition \cref{eq:error_def} of $\Err(\set{S};g)$:
\begin{align}
    \Err^2(\set{S};g) &= \norm{(T-T_{\set{S}})g}_{\RKHS}^2  = \norm{T^{-1/2}(T-T_{\set{S}})g}_{\Ltwo(\mu)}^2 \\
    &= \langle g, (T-T_{\set{S}})T^{-1}(T-T_{\set{S}}) g \rangle_{\Ltwo(\mu)} \le \langle g, (T-T_{\set{S}}) g \rangle_{\Ltwo(\mu)}.
\end{align}
The last inequality follows because $T - T_{\set{S}}$ and $T_{\set{S}} = T - (T - T_{\set{S}})$ are both positive semidefinite.
\end{proof}

For the rest of our proof of \cref{thm:rpc}, we employ the following notation. Let $\preceq$ denote the Loewner order for bounded, linear operators on $\Ltwo(\mu)$. That is, $A \preceq B$ if $B-A$ is positive semidefinite (psd). Let $k_i$ denote the residual
\begin{equation}
    k_i \coloneqq k - k_{\{s_1,\ldots,s_i\}}
\end{equation}
for the approximate kernel associated with the first $i$ nodes.
(See \cref{prop:rpc_equivalence} below.)
The associated integral operator is 
\begin{equation}
    T_if \coloneqq \int_{\set{X}} k_i(\cdot,x) f(x) \,\d\mu(x).
\end{equation}

To analyze \RPCholesky, we begin by analyzing a single step of the procedure.
The expected value of the residual kernel after one step, $k_1$, is 
\begin{equation}
    \expect k_1(x,y) = k(x,y) - \frac{\int_{\set{X}} k(x,s) k(s,y) \, \d\mu(s)}{\int_{\set{X}} k(z,z) \, \d\mu(z)}.
\end{equation}
Consequently, the expected value of the integral operator $T_1$ is
\begin{equation} \label{eq:Phi}
    \Phi(T) \coloneqq \expect[T_1] = T - \frac{T^2}{\Tr T}.
\end{equation}

The map $\Phi$ enjoys several useful properties \cite[Lem.~3.2]{CETW23}.

\begin{proposition}
    The map $\Phi$ is positive, monotone, and concave with respect to the order $\preceq$ on the set of trace-class psd operators on $\Ltwo(\mu)$.
    That is, for trace-class psd operators $A,B$,
    \begin{equation} \label{eq:positive_monotone}
        0\preceq A\preceq B \implies 0\preceq \Phi(A) \preceq \Phi(B)
    \end{equation}
    and, for $\theta \in [0,1]$,
    \begin{equation}
        \theta \Phi(A) + (1-\theta)\Phi(B) \preceq \Phi(\theta A + (1-\theta)B). \label{eq:concavity}
    \end{equation}
\end{proposition}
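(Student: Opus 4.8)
The plan is to establish the four properties positivity, positive homogeneity, concavity, and monotonicity, in that order, deducing monotonicity from the first three. Throughout one may assume $A,B \neq 0$, since $\Phi(0)=0$ settles the degenerate cases; in particular $\Tr A, \Tr B > 0$, and $0 \preceq A \preceq B$ forces $B-A$ to be trace-class and psd with $\Tr A \le \Tr B$. For positivity, I would note that every eigenvalue of the psd trace-class operator $A$ is at most $\Tr A$, so $\Id - A/\Tr A \succeq 0$; since this operator commutes with $A$, one can write $\Phi(A) = (\Id - A/\Tr A)^{1/2}\,A\,(\Id - A/\Tr A)^{1/2} \succeq 0$. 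Positive homogeneity is the direct computation $\Phi(cA) = cA - c^2A^2/(c\Tr A) = c\,\Phi(A)$ for $c>0$.

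The heart of the argument is concavity. Because the term $T$ in $\Phi(T) = T - T^2/\Tr T$ is affine, \cref{eq:concavity} is equivalent to operator convexity of $\Psi(T) \coloneqq T^2/\Tr T$ on psd trace-class operators. I would obtain this from the joint operator convexity of the map $(X,t)\mapsto t^{-1}X^2$ over psd $X$ and scalars $t>0$: by the Schur complement criterion for bounded self-adjoint block operators on $\Ltwo(\mu)\oplus\Ltwo(\mu)$, the relation $Z \succeq t^{-1}X^2$ holds precisely when $\left[\begin{smallmatrix} Z & X \\ X & t\Id\end{smallmatrix}\right]\succeq 0$, and this psd condition is stable under convex combinations of the triples $(X,t,Z)$. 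Composing with the affine map $T\mapsto(T,\Tr T)$ — here using linearity of the trace — shows $\Psi$ is operator convex, hence $\Phi = \mathrm{id}-\Psi$ is operator concave, which is \cref{eq:concavity}. (Equivalently, this is \cite[Lem.~3.2]{CETW23}, whose matrix-level proof transfers unchanged once boundedness of the operators in play is noted.) Finally, monotonicity follows formally: applying concavity with $\theta=\tfrac12$ to the pair $2X, 2Y$ and invoking positive homogeneity yields the superadditivity bound $\Phi(X+Y)\succeq \Phi(X)+\Phi(Y)$ for psd trace-class $X,Y$; taking $X=A$, $Y=B-A\succeq 0$ and using $\Phi(B-A)\succeq 0$ from the positivity step gives $\Phi(B)\succeq\Phi(A)+\Phi(B-A)\succeq\Phi(A)$, which is \cref{eq:positive_monotone}.

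The main obstacle is the concavity step, and within it the operator-convexity claim for $\Psi$: one must make the Schur complement reasoning fully rigorous in the operator setting, checking that the relevant block operators are bounded and that the $(2,2)$ block $t\Id$ is boundedly invertible. Since $T$ is trace-class it is bounded, and $t=\Tr T>0$, so no genuine difficulty arises; alternatively one simply cites \cite[Lem.~3.2]{CETW23}. All remaining manipulations — the eigenvalue bound $\lambda_{\max}(A)\le\Tr A$, the commutation used in the positivity step, the homogeneity identity, and the superadditivity deduction — are routine.
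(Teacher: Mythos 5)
Your proof is correct. The positivity step (every eigenvalue of a psd trace-class operator is at most its trace, so $\mathrm{Id} - A/\Tr A \succeq 0$ and $\Phi(A)\succeq 0$ by commutation), the positive-homogeneity computation, and the deduction of monotonicity from superadditivity ($\Phi(B) = \Phi(A+(B-A)) \succeq \Phi(A)+\Phi(B-A)\succeq \Phi(A)$) coincide with the paper's argument. The genuine difference is the concavity step. The paper proves \cref{eq:concavity} by an explicit algebraic identity: the concavity gap $\Phi(\theta A + (1-\theta)B) - \theta\Phi(A) - (1-\theta)\Phi(B)$ equals
\begin{equation}
    \frac{\theta(1-\theta)}{\theta\Tr A + (1-\theta)\Tr B}\left(\sqrt{\frac{\Tr B}{\Tr A}}\,A - \sqrt{\frac{\Tr A}{\Tr B}}\,B\right)^{2},
\end{equation}
a positive multiple of the square of a self-adjoint operator and hence psd (the paper's displayed formula omits the exponent $2$, evidently a typo, since the square is what makes the right-hand side psd). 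You instead derive operator convexity of $\Psi(T)=T^2/\Tr T$ from joint operator convexity of the perspective-type map $(X,t)\mapsto t^{-1}X^2$, via the Schur-complement characterization $Z\succeq t^{-1}X^2 \iff \left[\begin{smallmatrix} Z & X \\ X & t\,\mathrm{Id}\end{smallmatrix}\right]\succeq 0$ and convexity of the psd cone, then compose with the affine map $T\mapsto (T,\Tr T)$. This is rigorous in the operator setting, as you note, because the $(2,2)$ block is a positive scalar multiple of the identity and hence boundedly invertible. The trade-off: the paper's computation is a two-line, fully self-contained verification, whereas your route invokes a slightly heavier (though standard) convexity principle but makes transparent that $\Phi$ is an instance of the jointly convex quadratic-over-linear construction and generalizes immediately to maps of the form $X^*Y^{-1}X$. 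Both handle the degenerate case $\Tr A=0$ or $\Tr B=0$ only by convention ($\Phi(0)\coloneqq 0$), which is harmless.
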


\newcommand{\IdOp}{\mathrm{Id}}

\begin{proof}
    We reproduce the proof from \cite[Lem.~3.2]{CETW23} for completeness. 

    \emph{\textbf{Positivity.}} Assume $A\succeq 0$.
    Observe that $\IdOp \succeq A / \Tr A$, where $\IdOp$ denotes the identity operator.
    Thus
    \begin{equation}
        \Phi(A) = A \left( \IdOp - \frac{A}{\Tr A}\right) \succeq 0,
    \end{equation}
    This establishes that $\Phi$ is positive: $A \succeq 0$ implies $\Phi(A) \succeq 0$.

    \emph{\textbf{Concavity.}} For $\theta \in [0,1]$, denote $\overline{\theta} \coloneqq 1 - \theta$.
    Then
    \begin{equation}
        \Phi(\theta A + \overline{\theta}B) - \theta \Phi(A) - \overline{\theta} \Phi(B) = \frac{\theta \overline{\theta}}{\theta \Tr A + \overline{\theta} \Tr B} \left( \sqrt{\frac{\Tr B}{\Tr A}} A - \sqrt{\frac{\Tr A}{\Tr B}} B\right) \succeq 0.
    \end{equation}
    This establishes the concavity claim \cref{eq:concavity}.

    \emph{\textbf{Monotonicity.}} 
    Suppose $0 \preceq A \preceq B$.
    Observe the map $\Phi$ is positive homogeneous: For $\theta > 0$, $\Phi(\theta A) = \theta \Phi(A)$.
    Thus,
    \begin{equation}
        \Phi(B) = \Phi(A + (B-A)) = 2 \Phi \left(\frac{A + (B-A)}{2}\right).
    \end{equation}
    Invoke concavity \cref{eq:concavity} to obtain
    \begin{equation}
        \Phi(B) = 2 \Phi \left(\frac{A + (B-A)}{2}\right) \succeq \Phi(A) + \Phi(B-A) \succeq \Phi(A).
    \end{equation}
    In the last inequality, we use the positivity $\Phi(B-A) \succeq 0$.
    This completes the proof of \cref{eq:positive_monotone}.
\end{proof}

We can use the map $\Phi$ to bound the residual integral operator $T_i$:

\begin{proposition} \label{prop:rpc_composition}
    For $i\ge 0$, we have the bound
    \begin{equation} \label{eq:rpc_iterate}
        \expect[T_i] \preceq \Phi^{i}(T),
    \end{equation}
    where $\Phi^{i}$ denotes the $i$-fold composition of $\Phi$.
\end{proposition}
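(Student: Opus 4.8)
The plan is to prove \cref{eq:rpc_iterate} by induction on $i$, combining the self-reproducing structure of \RPCholesky with an operator Jensen inequality for the concave map $\Phi$.

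The base case $i=0$ is immediate, since no nodes have been selected and $T_0 = T = \Phi^0(T)$. For the inductive step, assume $\expect[T_{i-1}] \preceq \Phi^{i-1}(T)$ and condition on the first $i-1$ sampled nodes $\set{S}_{i-1} = \{s_1,\dots,s_{i-1}\}$. By the self-reproducing property of \RPCholesky (\cref{prop:rpc_equivalence}), conditionally on $\set{S}_{i-1}$ the next node $s_i$ is drawn from $k_{i-1}(x,x)\,\d\mu(x)$ (normalized by $\Tr T_{i-1}$) and $k_i$ is obtained from $k_{i-1}$ by a single rank-one \RPCholesky update. Repeating verbatim the computation that produced \cref{eq:Phi}, but with $k$ replaced by $k_{i-1}$, gives
\[
    \expect\bigl[\, T_i \mid \set{S}_{i-1} \,\bigr] = T_{i-1} - \frac{T_{i-1}^2}{\Tr T_{i-1}} = \Phi(T_{i-1}).
\]
Taking expectations over $\set{S}_{i-1}$ and using the tower property,
\[
    \expect[T_i] = \expect\bigl[\Phi(T_{i-1})\bigr] \preceq \Phi\bigl(\expect[T_{i-1}]\bigr) \preceq \Phi\bigl(\Phi^{i-1}(T)\bigr) = \Phi^i(T),
\]
where the first inequality is Jensen's inequality for the $\preceq$-concave map $\Phi$ \cref{eq:concavity}, and the second follows from monotonicity \cref{eq:positive_monotone} applied to the inductive hypothesis (note $\expect[T_{i-1}]$ is psd and trace-class, being an average of such operators). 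This closes the induction.

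The step requiring care is the Jensen inequality. The concavity bound \cref{eq:concavity} is only stated for two-point convex combinations, whereas the law of $T_{i-1}$ may be supported on a continuum of operators; upgrading \cref{eq:concavity} to $\expect[\Phi(X)] \preceq \Phi(\expect X)$ for a random psd operator $X$ calls for an approximation argument — discretize the distribution of $T_{i-1}$, iterate two-point concavity to obtain the finite Jensen inequality, then pass to the limit using continuity of $\Phi$ in trace norm together with the uniform bound $0 \preceq T_{i-1} \preceq T$. Along the way one should record the routine measurability and Bochner-integrability facts that make $\expect[T_{i-1}]$ and $\expect[\Phi(T_{i-1})]$ well-defined trace-class operators, and note that the edge case $\Tr T_{i-1} = 0$ forces $T_i = 0$ and is harmless. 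The self-reproducing property of \RPCholesky, visually obvious from the Gaussian-elimination picture of the update \cref{eq:rpc_update}, is the other ingredient and is exactly what \cref{prop:rpc_equivalence} is set up to provide.
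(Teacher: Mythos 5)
Your proof is correct and follows essentially the same route as the paper's: compute the conditional expectation $\expect[T_i \mid s_1,\ldots,s_{i-1}] = \Phi(T_{i-1})$, then combine the tower property with concavity (Jensen) and monotonicity of $\Phi$, iterated over $i$. Your added remarks on upgrading two-point concavity to an operator Jensen inequality and on measurability are careful points the paper passes over silently, but they do not change the argument.
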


\begin{proof}
The $i$th step of \RPCholesky applies one step of \RPCholesky to $k_{i-1}$.
Thus,
\begin{equation}
    \expect[T_i] = \expect[\expect[T_i \mid s_1,\ldots,s_{i-1}]] = \expect[\Phi(T_{i-1})].
\end{equation}
Now, we can apply concavity \cref{eq:concavity} to obtain
\begin{equation}
    \expect[T_i] = \expect[\Phi(T_{i-1})] \preceq \Phi(\expect [T_{i-1}]).
\end{equation}
Applying this result twice and using monotonicity \cref{eq:positive_monotone} yields:
\begin{equation}
    \expect[T_i] \preceq \Phi(\expect [T_{i-1}]) \text{ and } \expect[T_{i-1}] \preceq \Phi(\expect [T_{i-2}]) \implies \expect[T_i] \preceq \Phi(\Phi (\expect[T_{i-2}])).
\end{equation}
%
Iterating this argument, we obtain the desired conclusion \cref{eq:rpc_iterate}. 
\end{proof}

We now have the tools to prove \cref{thm:rpc}.
We actually prove the following slight refinement:

\begin{theorem} \label{thm:rpc_refined}
    Let $\set{S} = \{s_1,\ldots,s_n\}$ be generated by the \RPCholesky sampling algorithm.
    For any function $g \in \Ltwo(\mu)$, nonnegative integer $r \ge 0$, and real numbers $a,\varepsilon > 0$, we have
    \begin{equation}
        \expect \Err^2(\set{S};g) \le \norm{g}_{\Ltwo(\mu)}^2 \cdot \left(a + \varepsilon\sum_{i=r+1}^\infty \lambda_i\right)
    \end{equation}
    provided
    \begin{equation} \label{eq:n_condition_refined}
        n \ge r \log\left( \frac{\lambda_1}{a} \right) + \frac{1}{\varepsilon}.
    \end{equation}
\end{theorem}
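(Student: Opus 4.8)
The plan is to remove the randomness up front and reduce everything to a deterministic statement about the spectrum of $\Phi^n(T)$. Combining the variational identity \cref{eq:err_nystrom} with \cref{prop:rpc_composition} (using $T-T_{\set S}=T_n$) and taking expectations over $\set S=\{s_1,\dots,s_n\}$,
\[
 \expect \Err^2(\set S;g) \;\le\; \langle g, \expect[T_n]\,g\rangle_{\Ltwo(\mu)} \;\le\; \langle g, \Phi^n(T)\,g\rangle_{\Ltwo(\mu)} \;\le\; \lambda_{\max}\!\bigl(\Phi^n(T)\bigr)\,\norm{g}_{\Ltwo(\mu)}^2 ,
\]
using $\expect[T_n]\preceq\Phi^n(T)$, monotonicity of $g\mapsto\langle g,(\cdot)g\rangle$ under $\preceq$, and positive semidefiniteness of $\Phi^n(T)$. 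So it suffices to prove the deterministic estimate $\lambda_{\max}(\Phi^n(T)) \le a + \varepsilon\sum_{i>r}\lambda_i$ under \cref{eq:n_condition_refined}. Since $\Phi(A) = A - A^2/\Tr A$ from \cref{eq:Phi} is a function of $A$, the whole chain $T,\Phi(T),\Phi^2(T),\dots$ is simultaneously diagonalized by the Mercer basis $(e_i)$ of \cref{eq:mercer}. Writing $\lambda_i^{(j)}$ for the eigenvalue of $\Phi^j(T)$ on $e_i$ (so $\lambda_i^{(0)}=\lambda_i$) and $d_j := \Tr\Phi^j(T)=\sum_i\lambda_i^{(j)}$, the operator iteration becomes the scalar system $\lambda_i^{(j+1)} = \lambda_i^{(j)}(1 - \lambda_i^{(j)}/d_j)$. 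Two quick observations isolate the quantity to bound: each $j\mapsto\lambda_i^{(j)}$ and $j\mapsto d_j$ is nonincreasing (every factor lies in $[0,1]$ because $\lambda_i^{(j)}\le d_j$), and using unimodality of $x\mapsto x(1-x/d)$ on $[0,d]$ together with $\lambda_1^{(j)}+\lambda_i^{(j)}\le d_j$ one shows by induction that $\lambda_1^{(j)} = \max_i\lambda_i^{(j)} = \lambda_{\max}(\Phi^j(T))$. Hence we only need to control the single decreasing sequence $x_j := \lambda_1^{(j)}$, which (bounding $d_j\le r\lambda_1^{(j)} + \eta_r$ with $\eta_r := \sum_{i>r}\lambda_i$) obeys $x_{j+1}\le x_j - x_j^2/(rx_j+\eta_r)$.

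\textbf{Two-phase analysis of $x_j$.} We split the iterations according to the size of $x_j$ relative to $\eta_r$; if ever $x_j\le a+\varepsilon\eta_r$ we stop by monotonicity. In \emph{Phase~1}, where the top eigenvalue dominates the tail (say $x_j\ge 2\eta_r$), we have $rx_j+\eta_r\le(r+\tfrac12)x_j$, so $p_j := x_j/d_j \ge \tfrac{2}{2r+1}$ and each step contributes at least $-\log\!\bigl(1-\tfrac2{2r+1}\bigr) = \log\tfrac{2r+1}{2r-1} > \tfrac1r$ to $\log(x_0/x_j)$; this sharp elementary inequality is exactly what forces the leading term to have coefficient $r$. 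Telescoping gives $x_j < \lambda_1 e^{-j/r}$ throughout Phase~1, so after about $r\log(\lambda_1/a)$ Phase-1 steps either $x_j\le a$ or Phase~1 has ended with $x_j\lesssim\eta_r$. In \emph{Phase~2}, where $x_j\lesssim\eta_r$, the denominator $d_j$ is controlled by $\eta_r$, so rescaling $u_j := x_j/(c\eta_r)$ for a suitable constant $c$ turns the recursion into $u_{j+1}\le u_j - u_j^2$; then $1/u_{j+1}\ge 1/u_j+1$, so $1/u_j$ grows by at least one per step, $u_j$ decays like the reciprocal of the Phase-2 step count, and $\order(1/\varepsilon)$ further steps drive $x_j$ below $\varepsilon\eta_r$. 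Summing the two step counts (and absorbing the short transition band $x_j\asymp\eta_r$ into the leading $r\log(\lambda_1/a)$ term) yields \cref{eq:n_condition_refined}. Finally, \cref{thm:rpc} follows from \cref{thm:rpc_refined} by taking $a=\tfrac12\delta\eta_r$ and $\varepsilon=\tfrac12\delta$.

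\textbf{Main obstacle.} The reduction and diagonalization are routine given the concavity and monotonicity of $\Phi$ already proved. The real work is the scalar recursion: it decays at the clean geometric rate $1-\Theta(1/r)$ only while $x_j$ dominates $\eta_r$, and once $\eta_r$ becomes comparable the decay degrades to the $1/(\text{step count})$ regime. The delicate points are (i) placing the Phase-1 boundary so that the per-step log-decrement is at least $1/r$ \emph{exactly} — the inequality $\log\tfrac{2r+1}{2r-1}>\tfrac1r$ — and (ii) arranging that the slowdown through the transition and into the $1/(\text{step count})$ tail is paid for by precisely the additive $1/\varepsilon$ rather than a multiple of it; this hinges on tracking how much of $d_j$ comes from the top $r$ eigenvalues versus the tail once $x_j$ is small, and on a case split on whether $a+\varepsilon\eta_r$ lies above or below roughly $\eta_r/r$. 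The degenerate cases $r=0$, $\eta_r=0$, and $\varepsilon\ge 1/r$ (in which one of the two phases is empty) are easy but should be dispatched explicitly.
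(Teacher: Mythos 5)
Your reduction to the scalar recursion coincides with the paper's proof step for step: the passage from $\expect\Err^2$ to $\lambda_{\max}(\Phi^n(T))$, the diagonalization in the Mercer basis, the preservation of the eigenvalue ordering, the trace bound $d_j \le r\lambda_1^{(j)} + \eta_r$ with $\eta_r \coloneqq \sum_{i>r}\lambda_i$, and the resulting one-dimensional recursion $x_{j+1} \le x_j - x_j^2/(rx_j+\eta_r)$ are all exactly what the paper does. The divergence, and the gap, is in how you solve that recursion. Your two-phase analysis with hard thresholds does not deliver the stated constants. In Phase 2 as you set it up, entering at $x_j < 2\eta_r$ only gives $d_j \le (2r+1)\eta_r$, so $c = 2r+1$ is not a constant and the phase costs $(2r+1)/\varepsilon$ steps; pushing the boundary down to $x_j \le \eta_r/r$ gets $c=2$ and hence $2/\varepsilon$ steps, still double the claimed $1/\varepsilon$, and it opens a transition band $\eta_r/r \le x_j \le 2\eta_r$ in which the per-step acceptance ratio is only $\gtrsim 1/(2r)$, costing $\Theta(r\log r)$ additional steps that cannot be absorbed into $r\log(\lambda_1/a)$ when $a$ is close to $\lambda_1$. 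You flag both issues as ``delicate points'' but do not resolve them, and with hard phase boundaries I do not believe they can be resolved to give exactly $r\log(\lambda_1/a)+1/\varepsilon$.

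The paper avoids all of this by a continuous-time comparison: $x_j$ is dominated by the solution of $\dot x = -x^2/(rx+\eta_r)$, $x(0)=\lambda_1$, and separation of variables gives the hitting time of $a+\varepsilon\eta_r$ as
\begin{equation}
t_\star = r\log\frac{\lambda_1}{a+\varepsilon\eta_r} + \frac{\eta_r}{a+\varepsilon\eta_r} - \frac{\eta_r}{\lambda_1} \le r\log\frac{\lambda_1}{a} + \frac{1}{\varepsilon},
\end{equation}
so the $r\log(\cdot)$ and $1/\varepsilon$ contributions split additively with no constant loss and no transition regime. If you want to stay discrete, the same split is achieved by the potential $V(x) = -r\log x + \eta_r/x$ (the antiderivative appearing in that integral): writing $p_j = x_j/d_j \ge x_j/(rx_j+\eta_r)$, the elementary bounds $-\log(1-p)\ge p$ and $p/(1-p)\ge p$ give $V(x_{j+1})-V(x_j) \ge p_j\bigl(r + \eta_r/x_j\bigr) \ge 1$ in every step, uniformly over all regimes, and the conclusion follows by summing. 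Either route closes your gap; the two-phase bookkeeping as written does not.
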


\Cref{thm:rpc} immediately follows from \cref{thm:rpc_refined} by taking $\varepsilon = \delta/2$ and $a = (\delta/2)\sum_{i=r+1}^\infty \lambda_i$.
We now prove \cref{thm:rpc_refined}.

\begin{proof}[Proof of \cref{thm:rpc_refined}]
    In the standing notation, $T - T_{\set{S}}$ corresponds to $T_n$. Thus, by \cref{prop:rpc_composition}, we have
    \begin{equation}
        \expect \langle g, (T - T_{\set{S}}) g\rangle_{\Ltwo(\mu)} = \langle g, \expect[T_n]g\rangle_{\Ltwo(\mu)} \le \langle g, \Phi^{n}(T)g\rangle_{\Ltwo(\mu)} \le \norm{g}_{\Ltwo(\mu)}^2 \cdot \lambda_{\rm max}(\Phi^n(T)).
    \end{equation}
    Thus, it is sufficient to show that 
    \begin{equation} \label{eq:biggest_eigenvalue_condition}
        \lambda_{\rm max}(\Phi^n(T)) \le a + \varepsilon \sum_{i=r+1}^\infty \lambda_i
    \end{equation}
    holds under condition \cref{eq:n_condition_refined}.

    Using the definition \cref{eq:Phi} of $\Phi$, we see that the eigenvalues $\lambda^{(i)}_1 \ge \lambda^{(i)}_2 \ge \cdots $ of $\Phi^i(T)$ are given by the following recurrence
    \begin{equation} \label{eq:eigenvalue_recurrence}
        \lambda^{(i)}_j = \lambda^{(i-1)}_j - \frac{\left(\lambda^{(i-1)}_j\right)^2}{\sum_{\ell=1}^\infty\lambda^{(i-1)}_\ell} \quad \text{for } i,j = 1,2,\ldots
    \end{equation}
    with initial condition $\lambda^{(0)}_j = \lambda_j$ for all $j$.
    From this formula, we immediately conclude that each $\lambda_j^{(i)}$ is nonnegative and nonincreasing in $i$.
    In addition, the ordinal ranking $\lambda_1^{(i)} \ge \lambda_2^{(i)} \ge \cdots $ is preserved for each $i$. 
    Indeed, if $\lambda_{j+1}^{(i-1)}-\lambda_j^{(i-1)} \ge 0$, then
    \begin{align}
        \lambda^{(i)}_{j+1} - \lambda^{(i)}_j &= \lambda^{(i-1)}_{j+1} - \lambda^{(i-1)}_{j} - \frac{\left(\lambda^{(i-1)}_{j+1}\right)^2 - \left(\lambda^{(i-1)}_j\right)^2}{\sum_{\ell=1}^\infty\lambda^{(i-1)}_\ell} \\
        &= \left( \lambda^{(i-1)}_{j+1} - \lambda^{(i-1)}_{j} \right) \left(1 - \frac{\lambda^{(i-1)}_{j+1}+\lambda^{(i-1)}_{j}}{\sum_{\ell=1}^\infty\lambda^{(i-1)}_\ell}\right) \ge 0,
    \end{align}
    and the claim follows by induction on $i$. 
    To bound $\lambda_{\rm max}(\Phi^i(T)) = \lambda_1^{(i)}$, we bound the sum of eigenvalues appearing in the recurrence \cref{eq:eigenvalue_recurrence}:
    \begin{equation} \label{eq:trace_bound}
        \sum_{\ell=1}^\infty\lambda^{(i-1)}_\ell = \lambda^{(i-1)}_1 + \cdots + \lambda^{(i-1)}_r + \sum_{\ell=r+1}^\infty \lambda^{(i-1)}_\ell \le r \lambda^{(i-1)}_1 + \sum_{\ell=r+1}^\infty \lambda_\ell.
    \end{equation}
    To bound the first $r$ terms, we used the ordering of the eigenvalues $\lambda_1^{(i-1)} \ge \cdots \ge \lambda_r^{(i-1)}$.
    To bound the tail, we used the fact that for each $\ell\in\{r+1,r+2,\ldots\}$, the eigenvalue $\lambda^{(i-1)}_\ell$ is nonincreasing in $i$ and thus bounded by $\lambda^{(0)}_\ell = \lambda_\ell$.
    Substituting \cref{eq:trace_bound} in \cref{eq:eigenvalue_recurrence}, we obtain
    \begin{equation} \label{eq:recurrence_bound}
        \lambda^{(i)}_1 \le \lambda^{(i-1)}_1 - \frac{\left(\lambda^{(i-1)}_1\right)^2}{r\lambda_1^{(i-1)} + \sum_{\ell=r+1}^\infty \lambda_\ell} \quad \text{for } i=1,2,\ldots.
    \end{equation}

    To bound the solution to the recurrence \cref{eq:recurrence_bound}, we pass to continuous time.
    Specifically, for any $t\in \mathbb{Z}_+$, $\lambda_1^{(t)}$ is bounded by the process $x(t)$ solving the initial value problem
    \begin{equation}
        \frac{\d}{\d t} x(t) = - \frac{x(t)^2}{rx(t) + \sum_{\ell=r+1}^\infty \lambda_\ell}, \quad x(0) = \lambda_1.
    \end{equation}
    The bound $\lambda^{(t)}_1 \le x(t)$ holds for all $t \in \mathbb{Z}_+$ because $x \mapsto -x^2/(rx + \sum_{\ell=r+1}^\infty \lambda_\ell)$ is decreasing for $x \in (0,\infty)$.
    We are interested in the time $t = t_\star$ at which $x(t_\star) = a + \varepsilon\sum_{\ell=r+1}^\infty \lambda_\ell$, which we obtain by separation of variables
    \begin{align}
        t_\star &= \int_{\lambda_1}^{a + \varepsilon\sum_{\ell=r+1}^\infty \lambda_\ell} - \frac{rx + \sum_{\ell=r+1}^\infty \lambda_\ell}{x^2} \, \d x = \left[-r \log x  + \frac{\sum_{\ell=r+1}^\infty \lambda_\ell}{x}\right]_{x=\lambda_1}^{x=a + \varepsilon\sum_{\ell=r+1}^\infty \lambda_\ell} \\
        &= r \log \frac{\lambda_1}{a+\varepsilon\sum_{\ell=r+1}^\infty \lambda_\ell} + \frac{\sum_{\ell=r+1}\lambda_\ell}{a + \varepsilon\sum_{\ell=r+1}\lambda_\ell} - \frac{\sum_{\ell=r+1}\lambda_\ell}{\lambda_1} \le r \log \frac{\lambda_1}{a} + \frac{1}{\varepsilon}.
    \end{align}
    Since $x$ is decreasing, the following holds for $n$ satisfying \cref{eq:n_condition}: 
    \begin{equation}
        \lambda_{\rm max}(\Phi^n(T)) = \lambda^{(n)}_1 \le x(n) \le x(t_\star) = a + \varepsilon\sum_{\ell=r+1}^\infty \lambda_\ell.
    \end{equation}
    This shows \cref{eq:biggest_eigenvalue_condition}, completing the proof.
\end{proof}

\section{Proof of \texorpdfstring{\cref{thm:rejection}}{theorem 4}}

To see why algorithm \ref{alg:rpc_rejection} produces samples from the \RPCholesky sampling distribution, observe that at each effective iteration $i$ the matrix $\mat{L}(1:i-1,1:i-1)$ is the Cholesky factor of $k(\set{S},\set{S})$. Therefore, $d$ is equal to the residual kernel $k^{\rm res}_{\set{S}} \coloneqq k-k_\set{S}$ evaluated at $s_i$
\begin{equation}
    d = k^{\rm res}_{\set{S}}(s_i,s_i).
\end{equation}

At each effective iteration $i$, the candidate point $s_i$ is sampled from $k(x,x)\, \d\mu(x)$ and accepted with probability $k^{\rm res}_{\set{S}}(s_i,s_i) / k(s_i,s_i)$.
Therefore, conditional on acceptance and previous steps of the algorithm, $s_i$ is sampled from the probability measure
\begin{equation}
    \frac{1}{\prob\{\text{Acceptance} \mid \set{S}\}} \cdot k(x,x)\, \d\mu(x) \cdot \frac{k^{\rm res}_{\set{S}}(x,x)}{k(x,x)} = \frac{k^{\rm res}_{\set{S}}(x,x) \, \d\mu(x)}{\prob\{\text{Acceptance} \mid \set{S}\}}.
\end{equation}
Since this is a probability measure which must integrate to one, the probability of acceptance is 
\begin{equation}
    \prob\{\text{Acceptance} \mid \set{S}\} = \int_{\set{X}} k^{\rm res}_{\set{S}}(x,x) \, \d\mu(x).
\end{equation}
Therefore, conditional on acceptance and previous steps of the algorithm, $s_i$ is sampled from the probability measure $k^{\rm res}_{\set{S}}(x,x) \, \d\mu(x)/\int_{\set{X}} k^{\rm res}_{\set{S}}(x,x) \, \d\mu(x).$
This corresponds exactly with the \RPCholesky sampling procedure described in algorithm \ref{alg:rpc_unoptimized} (see \cref{prop:rpc_equivalence} below).

Now, we bound the runtime.
For each iteration $i$, the expected acceptance probability is bounded from below by the smallest possible trace error achievable by a rank $(i-1)$ approximation
\begin{equation}
    \prob\{\text{Acceptance}\} = \expect[\prob\{\text{Acceptance} \mid \set{S}\}] = \expect\left[\int_{\set{X}} k^{\rm res}_{\set{S}}(x,x) \, \d\mu(x)\right] \ge \eta_{i-1}.
\end{equation}
Therefore, the expected number of rejections at step $i$ is bounded by the expectation of a geometric random variable with parameter $\eta_{i-1}$, i.e., by $\eta_{i-1}^{-1}$. Since each loop iteration takes $\order(i^2)$ operations (attributable to line 4) and at most $\eta_{i-1}^{-1}$ expected iterations are required to accept at step $i$, the total expected runtime is at most $\order(\sum_{i=1}^n i^2/\eta_{i-1}) \le \order(n^3/\eta_{n-1})$. \hfill $\square$

\section{Derivation of\texorpdfstring{~\cref{eq:error_formula_complete}}{(7)}} \label{sec:derivation}

For completeness, we provide a derivation of \cref{eq:error_formula_complete}.
This result is standard; see, e.g., \cite[Eq.~(7)]{bach_lever}.
Recall our definition \cref{eq:error_formula_almost} for $\Err(\set{S},\vec{w};g)$:
\begin{equation}
    \mathrm{Err}(\set{S},\vec{w};g) = \max_{\norm{f}_{\RKHS} \le 1} \left|\int_{\set{X}} f(x) g(x) \, \d\mu(x) - \sum_{i=1}^n w_i f(s_i)\right|.
\end{equation}
Use the reproducing property of $k$ and linearity to write
\begin{equation}
    \mathrm{Err}(\set{S},\vec{w};g) = \max_{\norm{f}_{\RKHS} \le 1} \left|\int_{\set{X}} \langle f, k(\cdot,x)\rangle_\RKHS g(x) \, \d\mu(x) - \sum_{i=1}^n w_i \langle f, k(\cdot,s_i)\rangle_\RKHS\right|.
\end{equation}
Now apply linearity of $\langle \cdot,\cdot\rangle_\RKHS$ to obtain
\begin{equation}
    \mathrm{Err}(\set{S},\vec{w};g) = \max_{\norm{f}_{\RKHS} \le 1} \left|\left\langle f, \int_{\set{X}} k(\cdot,x) g(x) \, \d\mu(x) - \sum_{i=1}^n w_i  k(\cdot,s_i)\right \rangle_\RKHS\right|.
\end{equation}
Use the dual characterization of the RKHS norm $\norm{\cdot}_{\RKHS}$ and the definition \cref{eq:integral_operator} of $T$ to conclude
\begin{equation} 
    \mathrm{Err}(\set{S},\vec{w};g) = \norm{Tg - \sum_{i=1}^n w_i k(\cdot,s_i)}_{\RKHS}.
\end{equation}

\section{Details for numerical experiments} \label{sec:numerical_experiment_details}

In this section, we provide additional details about our numerical experiments.

\paragraph{Continuous volume sampling.} 
We are unaware of a provably correct $n$-DPP/continuous volume distribution sampler that is sufficiently efficient to facillitate numerical testing and comparisons.
The best-known sampler \cite{RO19} for the continuous setting is inexact and requires $\order(n^5 \log n)$ MCMC steps, which would be prohibitive for us.
To produce samples from the continuous volume sampling distribution for this paper, we use a continuous analog of the MCMC algorithm \cite{AOR16}.
Given a set $\set{S} = \{s_1,\ldots,s_n\}$ and assuming $\mu$ is a probability measure, one MCMC step proceeds as follows:
\begin{enumerate}
    \item Sample $s_{\rm new} \sim \mu$ and $s_{\rm old} \sim \Unif \set{S}$.
    \item Define $\set{S}' = \set{S} \cup \{s_{\rm new}\} \setminus \{s_{\rm old}\}$.
    \item With probability 
    \begin{equation}
        \frac{1}{2} \min \left\{ 1, \frac{\det k(\set{S}',\set{S}')}{\det k(\set{S},\set{S})} \right\},
    \end{equation}
    accept and set $\set{S} \leftarrow \set{S}'$.
    Otherwise, leave $\set{S}$ unchanged.
\end{enumerate}
In our experiments, we initialize with $s_1,\ldots,s_n$ drawn iid from $\mu$ and run for $10n$ MCMC steps.

Running for just $10n$ MCMC steps is aggressive, even in the finite setting.
Indeed, the theoretical analysis of \cite[Thm.~2, Eq.~(1.2)]{AOR16}
for $\mu = \Unif\{1,2,\ldots,|\set{X}|\}$, proves the sampler mixes to stationarity in at most $\tilde{\order}(n^2|\set{X}|)$ MCMC steps when properly initialized.
Even with just $10n$ MCMC steps, continuous volume sampling is dramatically more expensive than \RPCholesky in our experiments for $n\ge 64$.

Finally, we notice that the performance of continuous volume sampling degrades to that of iid sampling for sufficiently large $n$.
We believe that the cause for this are numerical issues in evaluating the determinants of the kernel matrices $k(\set{S},\set{S})$.
This suggests that, in addition to being faster, \RPCholesky may be more numerically robust than continuous volume sampling.

\paragraph{Thinning and positively weighted kernel quadrature.} Our kernel quadrature experiments with thinning and PWKQ is based on code from the authors of \cite{PWKQ} available online at \url{https://github.com/satoshi-hayakawa/kernel-quadrature}, which uses the \texttt{goodpoints} package (\url{https://github.com/microsoft/goodpoints}) by the authors of \cite{DM21,DM22,SDM22} to perform thinning.
We use $\mathfrak{g} = 4$, $\delta=0.5$, and four bins for the \textsc{Compress}\texttt{++} algorithm.

\paragraph{Miscellaneous.}
\ifneurips{Our code is included in the supplementary materials and can be found online at}\else{Our code is available online at}\fi
\boxedtext{\url{https://github.com/eepperly/RPCholesky-Kernel-Quadrature}.}
To evaluate exact integrals
\begin{equation}
    Tg(x) = \int_{\set{X}} k(x,y) g(y) \, \d\mu(y)
\end{equation}
for \cref{fig:moon}, we use the 2D functionality in ChebFun \cite{DHT14}.
To compute the optimal weights \cref{eq:weights}, we add a small multiple of the identity to regularize the system:
\begin{equation}
    \vec{w}_{\star, \, {\rm reg}} = (k(\set{S},\set{S}) + 10\varepsilon_{\rm mach} \operatorname{trace} (k(\set{S},\set{S}))\cdot  \mathbf{I})^{-1} Tg(\set{S}).
\end{equation}
Here, $\varepsilon_{\rm mach} = 2^{-52}$ is the double precision machine epsilon.

\section{Randomly pivoted Cholesky and Nystr\"om approximation}

In this section, we describe the connection between the \RPCholesky algorithm and Nystr\"om approximation.
We hope that this explanation provides context that will be helpful in understanding the \RPCholesky algorithm and the proofs of \cref{thm:rpc,thm:rejection}.

To make the following discussion more clear, we begin by rewriting the basic \RPCholesky algorithm (see algorithm \ref{alg:rpc_unoptimized}) in \cref{alg:rpc_for_understanding} to avoid overwriting the kernel $k$ at each step:

\begin{algorithm}[ht]
	\caption{\RPCholesky: unoptimized implementation without overwriting \label{alg:rpc_for_understanding}}
	\begin{algorithmic}[1]
		\Require Kernel $k : \set{X} \times \set{X} \to \real$ and number of quadrature points $n$
		\Ensure Quadrature points $s_1,\ldots,s_n$ and Nystr\"om approximation $\hat{k}_n = k_{\{s_1,\ldots,s_n\}}$ to $k$
        \State Initialize $k_0 \leftarrow k$, $\hat{k}_0 \leftarrow 0$
		\For{$i = 1,2,\ldots,n$}
            \State Sample $s_i$ from the probability measure $k(x,x) \, \d\mu(x) / \int_{\set{X}} k(x,x) \, \d\mu(x)$
            \State Update Nystr\"om approximation $\hat{k}_i \leftarrow \hat{k}_{i-1} + k_{i-1}(\cdot,s_i) k_{i-1}(s_i,s_i)^{-1} k_{i-1}(s_i,\cdot)$
            \State Update kernel $k_i \leftarrow k_{i-1} - k_{i-1}(\cdot,s_i) k_{i-1}(s_i,s_i)^{-1} k_{i-1}(s_i,\cdot)$
        \EndFor
	\end{algorithmic}
\end{algorithm}

From this rewriting, we see that in addition to selecting quadrature nodes $s_1,\ldots,s_n$, the \RPCholesky algorithm builds an approximation $\hat{k}_n$ to the kernel $k$.
Indeed, one can verify that the approximation $\hat{k}_n$ produced by this algorithm is precisely the Nystr\"om approximation (\cref{def:nystrom}).

\begin{proposition} \label{prop:rpc_equivalence}
    The output $\hat{k}_n$ of \cref{alg:rpc_for_understanding} is equal to the Nystr\"om approximation $k_{\{s_1,\ldots,s_n\}}$.
\end{proposition}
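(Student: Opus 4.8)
The plan is to prove \cref{prop:rpc_equivalence} by induction on $n$, showing that the running sum $\hat k_i$ built in \cref{alg:rpc_for_understanding} equals the Nystr\"om approximation $k_{\set{S}_i}$ associated with $\set{S}_i \coloneqq \{s_1,\ldots,s_i\}$, and simultaneously that the running residual $k_i = k - \hat k_i$ is itself a kernel (symmetric psd bivariate function) whose own one-step \RPCholesky update is exactly the rank-one term added at step $i+1$. The base case $i=0$ is immediate since $\hat k_0 = 0$ and $k_{\emptyset} = 0$. For the inductive step, the key algebraic identity to establish is that, writing $\set{S}' = \set{S} \cup \{s\}$ for a new node $s$, the Nystr\"om kernel satisfies the \emph{recursive} relation
\begin{equation}
  k_{\set{S}'}(x,y) = k_{\set{S}}(x,y) + \frac{(k-k_{\set{S}})(x,s)\,(k-k_{\set{S}})(s,y)}{(k-k_{\set{S}})(s,s)},
\end{equation}
which is precisely line 4 of the algorithm with $k_{i-1} = k - \hat k_{i-1} = k - k_{\set{S}}$ supplied by the induction hypothesis.

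The main work is verifying this recursion for the Nystr\"om kernel. I would do this with the block structure of the pseudoinverse. Order the nodes so that $\set{S}' = (\set{S}, s)$ and write the Gram matrix in block form
\begin{equation}
  k(\set{S}',\set{S}') = \begin{bmatrix} k(\set{S},\set{S}) & k(\set{S},s) \\ k(s,\set{S}) & k(s,s) \end{bmatrix}.
\end{equation}
The Schur complement of the top-left block is exactly $d \coloneqq k(s,s) - k(s,\set{S})\,k(\set{S},\set{S})^\dagger\,k(\set{S},s) = (k-k_{\set{S}})(s,s)$. Applying the standard block formula for the (pseudo)inverse of a psd matrix in terms of the Schur complement — valid here because the relevant range conditions hold for psd Gram matrices, with the convention $d^\dagger = d^{-1}$ when $d > 0$ and the rank-one term vanishing when $d = 0$ — and then multiplying out $k(x,\set{S}')\,k(\set{S}',\set{S}')^\dagger\,k(\set{S}',y)$ with $k(x,\set{S}') = \onebytwo{k(x,\set{S})}{k(x,s)}$ yields, after collecting terms, exactly the claimed recursion. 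The $d = 0$ case should be handled separately (or noted as a limiting/degenerate case): when $d=0$, the point $s$ lies in the span already captured, no new direction is added, $k_{\set{S}'} = k_{\set{S}}$, and the algorithm's rank-one update is $0/0$ which the implementation treats as $0$.

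The main obstacle I anticipate is bookkeeping around the pseudoinverse: unlike the invertible case, one must justify that the block-inverse (Banachiewicz-type) formula holds with Moore--Penrose pseudoinverses for psd Gram matrices, i.e.\ that the column space of $k(\set{S},s)$ lies in that of $k(\set{S},\set{S})$ so that $k(\set{S},\set{S})\,k(\set{S},\set{S})^\dagger k(\set{S},s) = k(\set{S},s)$, and likewise for the full matrix. This is automatic because all these blocks are principal submatrices / sub-blocks of a single psd kernel matrix, but it needs to be stated. A cleaner alternative that sidesteps some of this is to argue directly in terms of orthogonal projections: $T_{\set{S}}$ (and its kernel $k_{\set{S}}$) can be characterized as the operator whose range is $\operatorname{span}\{k(\cdot,s_i)\}$ and which agrees with $T$ on that span, i.e.\ $k_{\set{S}}$ is the reproducing kernel of the subspace $\mathcal{V}_{\set{S}} = \operatorname{span}\{k(\cdot,s_1),\ldots,k(\cdot,s_n)\} \subseteq \RKHS$; then the recursion is just the statement that the reproducing kernel of $\mathcal{V}_{\set{S}'}$ decomposes according to the orthogonal direct sum $\mathcal{V}_{\set{S}'} = \mathcal{V}_{\set{S}} \oplus \operatorname{span}\{(k-k_{\set{S}})(\cdot,s)\}$, with $(k-k_{\set{S}})(\cdot,s)$ being the projection of $k(\cdot,s)$ onto $\mathcal{V}_{\set{S}}^\perp$ inside $\RKHS$ and $\|(k-k_{\set{S}})(\cdot,s)\|_{\RKHS}^2 = (k-k_{\set{S}})(s,s) = d$. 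Either route closes the induction and hence the proof.
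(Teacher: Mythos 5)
Your proposal is correct and follows essentially the same route as the paper: induction on $n$, with the inductive step carried out by a block Schur-complement factorization of $k(\set{S}',\set{S}')$ showing that $k_{\set{S}'} = k_{\set{S}} + k_n(\cdot,s')k_n(s',s')^{-1}k_n(s',\cdot)$. Your added care about the pseudoinverse range conditions and the degenerate case $d=0$ (which the paper glosses over by writing ${}^{-1}$ throughout) is a reasonable refinement but does not change the argument.
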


\begin{proof}
    Proceed by induction on $n$.
    The base case $n = 0$ is immediate.
    Fix $n$ and let $\set{S} \coloneqq \{s_1,\ldots,s_n\}$, $s' \coloneqq s_{n+1}$, and $\set{S}' \coloneqq \{s_1,\ldots,s_n,s'\}$.
    Write the Nystr\"om approximation $k_{\set{S}'}$ as
    \begin{equation}
        k_{\set{S}'} = \onebytwo{k(\cdot,\set{S})}{k(\cdot,s')} \twobytwo{k(\set{S},\set{S})}{k(\set{S},s')}{k(s',\set{S})}{k(s',s')}^{-1} \twobyone{k(\set{S},\cdot)}{k(s',\cdot)}
    \end{equation}
    Compute a block $\mat{L}\mat{D}\mat{L}^\top$ factorization of the matrix in the middle of this expression:
    \begin{equation}
        \twobytwo{k(\set{S},\set{S})}{k(\set{S},s')}{k(s',\set{S})}{k(s',s')} = \twobytwo{\Id}{\vec{0}}{k(s',\set{S})k(\set{S},\set{S})^{-1}}{1}\twobytwo{k(\set{S},\set{S})}{\vec{0}}{\vec{0}^\top}{k_n(s',s')}\twobytwo{\Id}{k(\set{S},\set{S})^{-1}k(\set{S},s')}{\vec{0}^\top}{1}.
    \end{equation}
    Here, we've used the induction hypothesis
    \begin{equation} \label{eq:ih}
        k_n(s',s') = k(s',s') - k_{\set{S}}(s',s') = k(s',s') - k(s',\set{S})k(\set{S},\set{S})^{-1}k(\set{S},s').
    \end{equation}
    Thus,
    \begin{align}
        k_{\set{S}'} &= \onebytwo{k(\cdot,\set{S})}{k(\cdot,s')} \twobytwo{k(\set{S},\set{S})}{k(\set{S},s')}{k(s',\set{S})}{k(s',s')}^{-1} \twobyone{k(\set{S},\cdot)}{k(s',\cdot)} \\
        &= \onebytwo{k(\cdot,\set{S})}{k(\cdot,s')}\twobytwo{\Id}{-k(\set{S},\set{S})^{-1}k(\set{S},s')}{\vec{0}^\top}{1}\twobytwo{k(\set{S},\set{S})^{-1}}{\vec{0}}{\vec{0}^\top}{k_n(s',s')^{-1}} \\
        &\qquad \cdot 
        \twobytwo{\Id}{\vec{0}}{-k(s',\set{S})k(\set{S},\set{S})^{-1}}{1}
        \twobyone{k(\set{S},\cdot)}{k(s',\cdot)} \\
        &= \onebytwo{k(\cdot,\set{S})}{k_n(\cdot,s')}\twobytwo{k(\set{S},\set{S})^{-1}}{\vec{0}}{\vec{0}^\top}{k_n(s',s')^{-1}} \twobyone{k(\set{S},\cdot)}{k_n(\cdot,s')}  \label{eq:use_ih} \\
        &= \hat{k}_n + k_n(\cdot,s') k_n(s',s')^{-1}k_n(s',\cdot) = \hat{k}_{n+1}.
    \end{align}
    For \cref{eq:use_ih}, 
    we used the induction hypothesis \cref{eq:ih} again.
    Having established that $k_{\set{S}'} = k_{n+1}$, the proposition is proven.
\end{proof}

\end{document}